\documentclass[11pt]{article}

\usepackage[letterpaper,margin=1.08in]{geometry}
\usepackage[T1]{fontenc}
\usepackage[utf8]{inputenc}
\usepackage{lmodern}
\usepackage{microtype}
\usepackage{amsmath,amssymb,amsthm}
\usepackage{graphicx}
\usepackage{flafter}
\usepackage[authoryear,round]{natbib}
\usepackage{xcolor}
\usepackage{hyperref}
\usepackage{aliascnt}

\hypersetup{
  colorlinks=true,
  linkcolor=blue!55!black,
  citecolor=green!40!black,
  urlcolor=blue!60!black,
  pdfauthor={Zikai Xiong and Robert Freund},
  pdftitle={Symmetry-Dependent Rounding Ellipsoids}
}

\allowdisplaybreaks
\newtheorem{theorem}{Theorem}[section]
\newaliascnt{corollary}{theorem}

\aliascntresetthe{corollary}

\newaliascnt{proposition}{theorem}

\aliascntresetthe{proposition}

\newaliascnt{lemma}{theorem}
\newtheorem{lemma}[lemma]{Lemma}
\aliascntresetthe{lemma}

\newaliascnt{fact}{theorem}

\aliascntresetthe{fact}

\newaliascnt{remark}{theorem}
\newtheorem{remark}[remark]{Remark}
\aliascntresetthe{remark}

\theoremstyle{definition}
\newaliascnt{definition}{theorem}

\aliascntresetthe{definition}

\newaliascnt{example}{theorem}

\aliascntresetthe{example}

\usepackage[nameinlink,capitalize,noabbrev]{cleveref}
\crefname{theorem}{Theorem}{Theorems}
\crefname{corollary}{Corollary}{Corollaries}
\crefname{proposition}{Proposition}{Propositions}
\crefname{lemma}{Lemma}{Lemmas}
\crefname{fact}{Fact}{Facts}
\crefname{remark}{Remark}{Remarks}
\crefname{definition}{Definition}{Definitions}
\crefname{example}{Example}{Examples}
\Crefname{theorem}{Theorem}{Theorems}
\Crefname{corollary}{Corollary}{Corollaries}
\Crefname{proposition}{Proposition}{Propositions}
\Crefname{lemma}{Lemma}{Lemmas}
\Crefname{fact}{Fact}{Facts}
\Crefname{remark}{Remark}{Remarks}
\Crefname{definition}{Definition}{Definitions}
\Crefname{example}{Example}{Examples}
\crefformat{equation}{(#2#1#3)}
\Crefformat{equation}{(#2#1#3)}
\crefrangeformat{equation}{(#3#1#4)--(#5#2#6)}
\Crefrangeformat{equation}{(#3#1#4)--(#5#2#6)}
\crefmultiformat{equation}{(#2#1#3)}{ and~(#2#1#3)}{, (#2#1#3)}{, and~(#2#1#3)}
\Crefmultiformat{equation}{(#2#1#3)}{ and~(#2#1#3)}{, (#2#1#3)}{, and~(#2#1#3)}

\newcommand{\R}{\mathbb{R}}
\newcommand{\B}{B_2^n}
\newcommand{\sym}{\mathop{\mathbf{sym}}\nolimits}
\newcommand{\tr}{\operatorname{tr}}
\newcommand{\conv}{\mathop{\mathbf{conv}}\nolimits}
\newcommand{\intt}{\operatorname{int}}
\newcommand{\vol}{\operatorname{vol}_n}

\title{Symmetry-dependence in Rounding of a Convex Body}
\author{Zikai Xiong\thanks{Department of Industrial Engineering and Management Sciences, Northwestern University,
2145 Sheridan Road, Evanston, IL 60208, USA. \href{mailto:zikai.xiong@northwestern.edu}{zikai.xiong@northwestern.edu}.}
\and Robert M. Freund\thanks{MIT Sloan School of Management, 77 Massachusetts Avenue, Cambridge, MA 02139, USA.
\href{mailto:rfreund@mit.edu}{rfreund@mit.edu}.}}
\date{August 31, 2026}

\begin{document}
\maketitle

\begin{abstract}
The symmetry measure of a convex body $S\subset\mathbb{R}^n$ is given by:
\begin{equation*}
 \sym(S):=\max\{\alpha\ge0:\text{ there exists }x\in S\text{ such that }
 -\alpha(S-x)\subseteq S-x\}\,,
\end{equation*} where such an $x$ is called a Minkowski center. 
We prove that every convex body $S$ admits a $\sqrt{\frac{n}{\sym(S)}}$-rounding of $S$, namely, there exists an origin-centered ellipsoid $E$ and a center $c$ 
such that the following rounding holds:
\begin{equation*}
 E\subseteq S-c\subseteq\sqrt{\frac{n}{\sym(S)}}\,E\,.
\end{equation*} 
This result was conjectured in 2005 by Belloni and Freund. As special cases, this recovers the known result of an $n$-rounding of $S$ (since it always holds that $\sym(S) \ge 1/n$), and also recovers the known result of a $\sqrt{n}$-rounding when $\sym(S) = 1$.  

In the case when $S$ is a polytope given as the convex hull of points, the desired rounding is produced by a regularized minimum volume covering ellipsoid problem where the regularization is with respect to the Minkowski center. Similarly, when $S$ is a polytope given as the intersection of halfspaces, such a rounding is produced by a regularized maximum volume inscribed ellipsoid problem. In both of these cases, the rounding can be computed by first solving a linear optimization problem (to compute $\sym(S)$ and a Minkowski center), and then solving a convex optimization problem with a logarithmic determinant objective, second-order cone constraints, and one semidefinite cone constraint. 

We also show that the factor $\sqrt{\tfrac{n}{\sym(S)}}$ is nearly tight in its dependence on dimension and symmetry. When
$\tfrac{n+1}{1+\sym(S)}$ is an integer, we show by explicit construction that the factor $\sqrt{\tfrac{n}{\sym(S)}}$ is tight. In the more general case, for every dimension $n$ and every admissible symmetry value, we construct a polytope $S$ for which every rounding is at least $\sqrt{\tfrac{2}{3}}\sqrt{\tfrac{n}{\sym(S)}}$. 

\end{abstract}

\section{Introduction}

Let $S \subset \R^n$ be a full-dimensional convex body, namely a closed bounded convex set with nonempty interior.  
Let $c\in\R^n$ and $E \subset \R^n$ be an ellipsoid centered at the origin.  For $\rho >0$ we say that $c+E$ is a \emph{$\rho$-rounding} of $S$ if
\begin{equation}\label{eq:rounding-intro}
 E\subseteq S-c\subseteq\rho E\,, 
\end{equation}
and we call $\rho$ the \emph{rounding factor}.   
A $\rho$-rounding of $S$ means that $c + E$ approximates $S$ to within a factor of $\rho$; under the linear map that maps $E$ to the Euclidean unit ball, $S$ is mapped between the Euclidean ball and its $\rho$-scaled copy.
Roundings arise in cutting-plane methods
\citep{GrotschelLovaszSchrijver1993},
geometric random-walk sampling \citep{Lovasz1999},
volume computation \citep{KannanLovaszSimonovits1997},
integer optimization \citep{Lenstra1983},
and the design of efficient gradient methods for linear programming
\citep{Nesterov2008}.
They are also related to ellipsoidal
uncertainty models in robust optimization \citep{BenTalNemirovski1998}.

Polyhedra give rise to several optimization-related roundings.  If $S$ is a bounded polyhedron described by $m$
linear inequalities, the analytic center together with its Dikin ball yields an $(m-1)$-rounding of $S$ \citep{Dikin1967,Sonnevend1986}. \citet{Anstreicher1999} showed that an $O(n)$-rounding of $S$ can be constructed using points at or near the volumetric center. Indeed, there are extended versions of these results that apply to analytic centers of any convex body via the theory of self-concordant functions \citep{NesterovNemirovskii1994}.

For a general convex body $S$, the minimum-volume covering ellipsoid (MVCE, also known as the L\"owner ellipsoid) and the maximum-volume inscribed ellipsoid (MVIE, also known as the John ellipsoid) provide two other classical constructions.
Each yields an $n$-rounding for general $S$, and also a $\sqrt n$-rounding of $S$ if $S$ is centrally symmetric; furthermore these factors are sharp \citep{John1948,Todd2016}.

Algorithms for computing roundings via the two extremal ellipsoid problems MVCE and MVIE have also been the subject of research investigations. 
\citet{KhachiyanTodd1993} showed how to
approximate the MVIE of a polytope described by linear inequalities.
\citet{Khachiyan1996} provided a $(1+\varepsilon)n$-rounding algorithm based on the dual formulation of the MVCE problem. Other computational approaches include \citet{SunFreund2004,ZhangGao2003}. 
Through convex duality, the MVCE problem is also closely connected to $D$-optimal experimental design \citep{Guertuna2010}.

The sharp factors of $n$- and $\sqrt{n}$-roundings for general and centrally symmetric convex bodies (respectively) raise the question of what rounding factor can be attained at
an intermediate degree of symmetry?  To quantify this, define the symmetry of $S$ about $x\in S$ by
\begin{equation*}
 \sym(x,S):=\max\left\{\alpha\ge0\,\middle|\,-\alpha(S-x)\subseteq S-x\right\}\,,
\end{equation*}
and the symmetry of $S$ by
\begin{equation*}
 \sym(S):=\max_{x\in S}\sym(x,S)\,.
\end{equation*}
Thus $\sym(x,S)$ measures how far $S-x$ can be reflected through the origin while remaining in $S-x$, and a
point attaining the optimized value $\sym(S)$ is called a \emph{Minkowski center}.

For any convex body $S$ it holds that $\sym(S) \in [\tfrac1n,1]$; the upper bound follows from the definition of $\sym(S)$, and the lower bound is a consequence of the existence of an $n$-rounding of $S$ which implies that the center $c$ of such a rounding must have $\sym(c,S) \ge 1/n$.  Both limits are attained: for any centrally symmetric convex body $S$ (such as the unit ball of a norm) we have $\sym(S) =1$, and $\sym(S) = 1/n$ if (and only if) $S$ is any $n$-dimensional simplex  \citep{Minkowski1897,Radon1916,Klee1953}.  \citet{BelloniFreund2008} conjectured that for any convex body $S$ there exists a rounding with
factor $\sqrt{\tfrac{n}{\sym(S)}}$:
\begin{equation}\label{eq:conjectured-rounding-factor}
 E\subseteq S-c\subseteq\sqrt{\frac{n}{\sym(S)}}\,E\,.
\end{equation}
The quantity $\sqrt{\tfrac{n}{\sym(S)}}$ interpolates between the endpoints $n$ (for general convex bodies) and $\sqrt{n}$ (for symmetric convex bodies).  Our main result is a proof of this rounding guarantee, which is formally stated below in \Cref{thm:global-rounding}. 

The symmetry measure $\sym(x,S)$ occurs elsewhere in optimization. For example in conic optimization, it enters
interior-point complexity bounds \citep{BelloniFreund2009}
and is related to condition measures and well-posedness
\citep{PenaRoshchina2020}.
In robust and adaptive optimization, $\sym(x,S)$ and $\sym(S)$ enter approximation guarantees
\citep{BertsimasGoyalSun2011}.
Computing a Minkowski center has also been formulated as a
robust optimization problem and has been shown computationally to improve
the convergence of hit-and-run and cutting-plane methods
\citep{denHertogPauphiletSoali2024}.

Existing symmetry-dependent rounding guarantees
are expressed in terms of the symmetry at a prescribed center.  For an arbitrary $x\in\intt S$, \citet{BelloniFreund2008} proved the existence of a
$\tfrac{\sqrt{n}}{\sym(x,S)}$-rounding centered at $x$ and obtained the sharper factor
$\sqrt{\tfrac{n}{\sym(x_L,S)}}$ at the L\"owner center $x_L$.  At the John center $x_J$, the analogous
$\sqrt{\tfrac{n}{\sym(x_J,S)}}$-rounding follows from \citet{BrandenbergKoenig2015}.  All of these results use the local symmetry $\sym(x,S)$ and not the global (and optimized) symmetry $\sym(S)$.
Indeed, the symmetry at the
John center can be smaller than $\sym(S)$ by a factor of order $n$, and even a convex body with global symmetry greater
than $\tfrac12$ may require a rounding with factor $n$ when using the MVIE
\citep[Example~A.1]{BrandenbergGrundbacher2025}.  

Our constructive proof of the conjectured bound \eqref{eq:conjectured-rounding-factor} modifies the MVCE problem for polytopes by adding a symmetry-dependent Minkowski center regularization term.  In the case when $S$ is a polytope (represented either as the convex hull of points or as the intersection of halfspaces), the desired rounding can be computed by first solving a linear optimization problem (to compute $\sym(S)$ and a Minkowski center), and then solving a convex optimization problem formulation of either the associated MVCE or MVIE problem depending on the polytope representation. 

We also show that the bound \eqref{eq:conjectured-rounding-factor} is sharp up to a universal constant for every $n$ and every admissible symmetry value $\alpha$, and is exactly sharp for an infinite class of pairs of $n$ and $\alpha$.

\Cref{fig:four-roundings} shows a $2$-dimensional polytope $K$ (in blue) along with (a) the MVCE ellipsoidal rounding, (b) the MVIE rounding, (c) our center-regularized MVCE rounding, and (d) our center-regularized MVIE
rounding. Numerical details are provided in \Cref{app:numerical-example}.  For this polytope, both classical
volume-extremal roundings only attain the dimension-dependent factor $2$, whereas both of our constructions yield
rounding factors less than the bound $\sqrt{n/\sym(K)} \approx 1.6816$ and select centers visibly closer to the Minkowski center.

\begin{figure}[!htbp]
\centering
\begin{minipage}[t]{0.48\textwidth}
 \centering
 \includegraphics[width=1\linewidth]{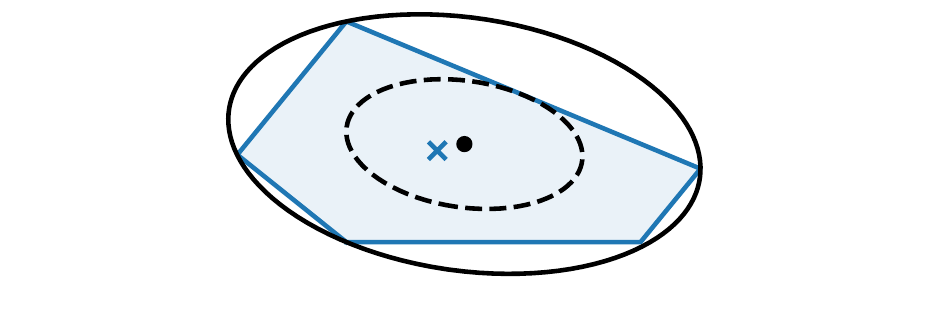}\\[-0.4ex]
 \small (a) Minimum-volume covering ellipsoid.
\end{minipage}%
\begin{minipage}[t]{0.48\textwidth}
 \centering
 \includegraphics[width=1\linewidth]{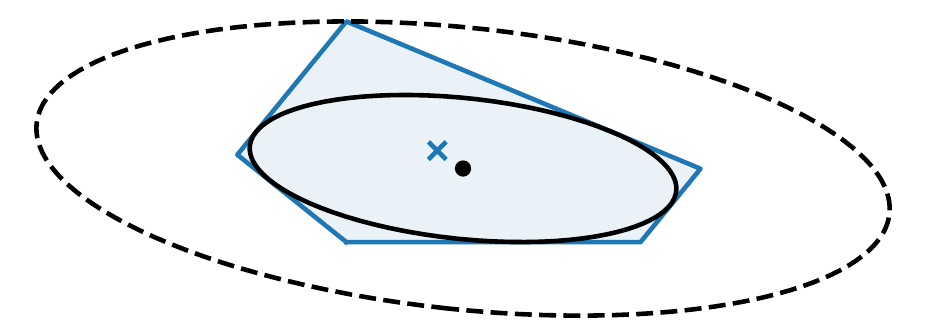}\\[-0.4ex]
 \small (b) Maximum-volume inscribed ellipsoid.
\end{minipage} 
\begin{minipage}[t]{0.48\textwidth}
 \centering
 \includegraphics[width=1\linewidth]{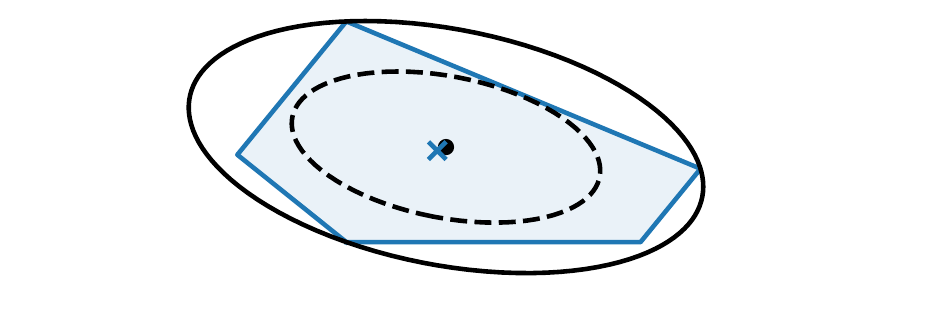}\\[-0.4ex]
 \small (c) Our center-regularized minimum-volume covering ellipsoid
 (\Cref{lm:rounding-w-representation}).
\end{minipage}%
\begin{minipage}[t]{0.48\textwidth}
 \centering
 \includegraphics[width=1\linewidth]{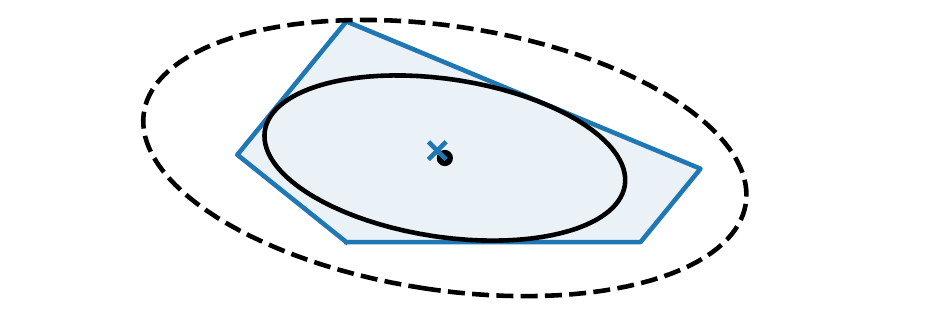}\\[-0.4ex]
 \small (d) Our center-regularized maximum-volume inscribed ellipsoid
 (\Cref{lm:john-rounding}).
\end{minipage}
\caption{Four ellipsoidal roundings of the polytope $K$.
The blue cross denotes the Minkowski center of $K$. The black dots denote ellipsoid centers.}
\label{fig:four-roundings}
\end{figure}

For $Q\succ0$, define the origin-centered ellipsoid
\begin{equation*}
 {E}_Q:=\left\{x\in\R^n\,\middle|\,x^\top Qx\le1\right\}\,.
\end{equation*}
Our main result is the proof of the following symmetry-dependent rounding bound for every convex body.

\begin{theorem}\label{thm:global-rounding}
Let $S\subset\R^n$ be a convex body.  Then there exists a rounding of $S$ with factor $\sqrt{\tfrac{n}{\sym(S)}}$, namely there exist $c \in \R^n$ and a matrix $Q\succ0$ such that
\begin{equation}
 {E}_Q
 \subseteq S-c \subseteq
 \sqrt{\frac{n}{\sym(S)}}\, {E}_Q\,.
\end{equation}
\end{theorem}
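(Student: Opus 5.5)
The plan is to reduce to polytopes and then run a KKT/variance argument at a center tied to the Minkowski center. First I would show it suffices to treat $S=\conv\{v_1,\dots,v_m\}$. Given a general convex body $S$, take polytopes $P_k$ converging to $S$ in Hausdorff distance and sharing a Minkowski center $x^*$ of $S$: for instance $P_k=\conv(V_k\cup(-\alpha V_k))$ after translating $x^*$ to $0$, with $V_k\subset S$ a finite $1/k$-net of $S$ and $\alpha=\sym(S)$. Then $-\alpha P_k\subseteq P_k$, so $\sym(P_k)\ge\sym(S)$. The roundings $(c_k,Q_k)$ for $P_k$ have $Q_k$ bounded above and below, because $P_k$ contains a fixed ball and lies in a fixed ball. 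A compactness argument then passes to a limit $(c,Q)$ rounding $S$ with factor $\sqrt{n/\sym(S)}$.

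For a polytope, translate so that the Minkowski center is $0$, and set $\alpha=\sym(S)$, so $-\alpha S\subseteq S$. Write $h(d)=\max_{x\in S-c}d^\top x$. The core estimate I would aim for is the following. Suppose $c+E_{Q}^{\rm out}$ covers $S$ and has KKT multipliers $u_i\ge0$ on contact points $v_i$ with $\sum_i u_i=n$, $\sum_i u_i(v_i-c)=0$ and $\sum_i u_i(v_i-c)(v_i-c)^\top=Q_{\rm out}^{-1}$. Fix $d$ and put $t_i=d^\top(v_i-c)\in[-h(-d),h(d)]$. A mean-zero random variable supported in $[-a,b]$ has second moment at most $ab$. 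Hence
\begin{equation*}
 d^\top Q_{\rm out}^{-1}d=\sum_i u_i t_i^2\le n\,h(d)\,h(-d)\,.
\end{equation*}
If $c$ were the Minkowski center, then $\min\{h(d),h(-d)\}\ge\alpha\max\{h(d),h(-d)\}$. This gives $h(d)h(-d)\le \min\{h(d),h(-d)\}^2/\alpha$, so $\min\{h(d),h(-d)\}\ge\sqrt{\alpha/n}\,\sqrt{d^\top Q_{\rm out}^{-1}d}$ for all $d$. That is exactly $\sqrt{\alpha/n}\,E_{Q_{\rm out}}\subseteq S-c$, i.e.\ the desired rounding with $E_Q=\sqrt{\alpha/n}\,E_{Q_{\rm out}}$.

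The difficulty is that the plain MVCE center $c$ satisfies the mean-zero condition but is not the Minkowski center. Conversely, fixing $c=0$ breaks the mean-zero condition: one then gets $\sum_i u_it_i^2=\mathrm{Var}+n\bar t^{\,2}$ with an uncontrolled mean $\bar t$. So the key step is to design a center-regularized MVCE. Concretely, I would minimize $-\log\det Q$ over $(Q,c)$, with $Q\succ0$ and $c$ free, subject to two constraints:
\begin{equation*}
(v_i-c)^\top Q(v_i-c)\le1 \quad\text{for all } i\,,\qquad c^\top Q c\le \beta\,,
\end{equation*}
where $\beta$ depends on $\alpha$ and $n$. The second constraint is where the second-order cone and semidefinite constraints of the abstract enter, after the change of variables $Q^{1/2}$, $Q^{1/2}c$. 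Its KKT conditions should read $\sum_i u_i(v_i-c)=\mu\, c$ with $\mu\ge0$, together with an adjusted covariance identity.

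I would then redo the variance bound with this nonzero mean, which is proportional to $d^\top c$. To control $h(\pm d)$ at $c$, I would use $-\alpha S\subseteq S$ about $0$ together with $c^\top Qc\le\beta$, which says $c$ is close to $0$ in the ellipsoidal norm. The aim is for the two error terms to cancel for the right choice of $\beta$, again yielding $n\,h(d)h(-d)$-type control with factor $n/\alpha$.

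I expect this tuning to be the main obstacle: choosing $\beta$, or more generally the form of the regularizer, so that the center-shift term exactly compensates the lost symmetry. I would first try $\beta$ chosen so that the constraint is tight and $\mu$ is determined by the stationarity in $c$, and check the extreme cases $\alpha=1$ (where $c=0$ should be optimal) and the simplex $\alpha=1/n$ (which should recover the Löwner $n$-rounding). The MVIE analogue for halfspace representations would follow by polarity.
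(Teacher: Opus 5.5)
\textbf{There is a genuine gap.} Your reduction to polytopes is sound. Approximating from inside by $P_k=\conv(V_k\cup(-\alpha V_k))\subseteq S$ even gives $\sym(0,P_k)\ge\alpha$ directly, which avoids the continuity of $\sym$ that the paper's outer approximation invokes. The gap is the polytope case. The step you flag as ``the main obstacle'' is the whole content of the theorem, and you leave it open.

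The regularizer you propose is also unlikely to work. With a separate constraint $c^\top Qc\le\beta$, your KKT system is $\sum_iu_i(v_i-c)=\mu c$, $Q^{-1}=\sum_iu_i(v_i-c)(v_i-c)^\top+\mu cc^\top$, and $\sum_iu_i=n-\mu\beta$. Here the multiplier $\mu$, and hence the ratio $\mu/\sum_iu_i$, depends on the instance. In particular, whenever the plain MVCE center already satisfies $c^\top Qc\le\beta$, your problem returns the MVCE itself. The paper notes, citing Brandenberg--Gonz\'alez Merino--Grundbacher, that the MVCE does not in general admit a concentric $\sqrt{n/\sym(S)}$ inner ellipsoid, and nothing in your setup rules such bodies out. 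What makes the argument work is not an a priori bound on how close $c$ is to the origin. It is an exact algebraic relation between $c$ and the dual weights.

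The missing idea is to pin that ratio by putting the penalty inside every vertex constraint with a fixed weight:
\[
(v_i-c)^\top Q(v_i-c)+\lambda_\alpha c^\top Qc\le1 \quad\text{for all } i,\qquad \lambda_\alpha=\frac{1+\alpha}{1-\alpha}.
\]
This remains convex under your change of variables $Q^{1/2}$, $Q^{1/2}c$. The KKT conditions then force:
\begin{itemize}
\item $\sum_iu_i=n$ and $\mu=n\lambda_\alpha$ in your notation;
\item $c=\frac{1-\alpha}{2}\bar v$, with $\bar v:=\frac1n\sum_iu_iv_i\in S$;
\item $Q^{-1}=\sum_iu_iv_iv_i^\top-\frac{(1-\alpha)n}{2}\bar v\bar v^\top$.
\end{itemize}

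Instead of a variance bound about $c$, use the symmetry only about the origin. Normalize $\max_iy^\top v_i=1$. Then $t_i:=y^\top v_i\in[-1/\alpha,1]$, so $(1-t_i)(1+\alpha t_i)\ge0$, which expands to $\alpha t_i^2+(1-\alpha)t_i\le1$. Averaging with weights $u_i/n$ gives
\[
\frac{\alpha}{n}\,y^\top Q^{-1}y\le\Bigl(1-y^\top c\Bigr)^2-\frac{1-\alpha^2}{4}\bigl(y^\top\bar v\bigr)^2\le\bigl(\max_{x\in S-c}y^\top x\bigr)^2 .
\]
The linear terms cancel precisely because $c=\frac{1-\alpha}{2}\bar v$. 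This inequality is the support-function form of $\sqrt{\alpha/n}\,{E}_Q\subseteq S-c$. The outer containment $S-c\subseteq {E}_Q$ is just primal feasibility, and for $\alpha=1$ one takes $c=0$.
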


In \Cref{sec:v-representation} we treat the case when $S$ is a polytope. We introduce a center-regularized version of the MVCE problem, we derive an equivalent convex formulation and its
dual, and we prove that the resulting ellipsoid yields the desired rounding stated in \Cref{thm:global-rounding}.
In \Cref{sec:polytope-to-convex-body} we prove the general case of \Cref{thm:global-rounding} from the polyhedral case by approximating $S$ by a sequence of polytopes and taking limits.
In \Cref{sec:h-representation} we further address the case when $S$ is a polytope but is described as the intersection of finitely many halfspaces.  In this representation, we present a center-regularized version of the MVIE problem and use this problem to (re-)prove the rounding guarantee of \Cref{thm:global-rounding}. 
In \Cref{sec:sharpness-family} we show that the rounding factor in \eqref{eq:conjectured-rounding-factor} is nearly tight in its dependence on dimension and symmetry.  
When $\tfrac{n+1}{1+\sym(S)}$ is an integer, we show by explicit construction that the factor
$\sqrt{\tfrac{n}{\sym(S)}}$ is tight.
In the more general case, for every
$n\ge1$ and admissible $\sym(S)$, we construct a polytope for which every rounding factor is at least
$\sqrt{\tfrac{2n}{3\sym(S)}}$.  

Concurrent work of \citet{BrandenbergGonzalezMerinoGrundbacher2026} in the context of Banach--Mazur distance establishes a related
nonconcentric ellipsoidal containment guarantee. Stating their result in our notation, they
prove that for every convex body $S$ there exists an origin-centered ellipsoid $E$ and possibly distinct centers $c$, $d$  
such that
$c+E\subseteq S\subseteq
    d+\sqrt{\tfrac{n}{\sym(S)}}E$. 
    In their construction, the outer ellipsoid is the MVCE of $S$, and
the inner ellipsoid is a translated and scaled copy of the outer ellipsoid.
They also present an example showing that the translation cannot in
general be removed while retaining the MVCE as the outer ellipsoid.
Nevertheless, their result implies a $2\sqrt{\tfrac{n}{\sym(S)}}$-rounding in the sense of
\eqref{eq:rounding-intro}.

\paragraph{Notation.}
For $d\ge1$, let $\R_+^d$ denote the nonnegative orthant, let $e$ denote the vector of all ones with dimension
determined by context, and let $I_d$ denote the $d\times d$ identity matrix.  The symbols
$\langle\cdot,\cdot\rangle$ and $\lVert\cdot\rVert_2$ denote the Euclidean inner product and norm, respectively.  For $w\in\R^d$,
$\operatorname{Diag}(w)$ is the diagonal matrix whose diagonal is $w$.  The relations $Q\succeq0$ and $Q\succ0$ mean that $Q$ is positive semidefinite and positive definite,
respectively.  For $Q\succ0$, $Q^{\tfrac12}$ denotes its unique positive definite square root.  For
$X\succ0$, we use the notation $X^{-2}:=(X^2)^{-1}$.

For a set $\Omega\subseteq\R^n$, a vector $c\in\R^n$, and a scalar $t$, write
$\intt\Omega$ for the interior of $\Omega$, $\Omega-c:=\{x-c:x\in\Omega\}$, and $t\Omega:=\{tx:x\in\Omega\}$.  The operator $\conv$ denotes the convex hull, and $\sym$ denotes the measure of symmetry defined above.  We write $\B$ for the Euclidean unit ball and $\vol$ for
$n$-dimensional volume.  A \emph{convex body} is a compact convex subset of $\R^n$ with nonempty interior.

\section{Center-regularized MVCE for a polytope}\label{sec:v-representation}

In this section we prove \Cref{thm:global-rounding} in the case when $S$ is a polytope. We construct a center-regularized version of the MVCE optimization problem, and we show that a solution to this optimization problem achieves the desired $\sqrt{\frac{n}{\sym(S)}}$-rounding.

As every point in a polytope can be expressed as a weighting of finitely many points, we consider the following \textit{$W$-representation} of $S$:
\begin{equation}\label{eq:global-polytope-setup}
 S=\conv\{a_1,\ldots,a_m\}\,,
\end{equation}
where $a_1,\ldots,a_m$ are the listed points.  Since $S$ is assumed to be a convex body, we further assume without loss of generality that the origin is in the interior of $S$ (which can be accomplished by choosing any interior point of $S$ and translating to the origin).   
While the representation in \eqref{eq:global-polytope-setup} is commonly referred
to as a $V$-representation, we prefer to call it a $W$-representation, with $W$ standing for ``weighting of points'' since we do not need to assume that each of the $a_i$ is a vertex.

First of all, we recall the ordinary MVCE problem. 
For an ellipsoid with center $c$ and shape matrix $Q$, denoted by $c+{E}_Q$,  its volume is
$\vol(c+{E}_Q)=\vol(\B)(\det Q)^{-\tfrac12}$ \citep{GrotschelLovaszSchrijver1993}.  Because an ellipsoid is
convex, it contains $S$ if and only if it contains $a_1, \ldots, a_m$.  Thus the MVCE solves the optimization problem
\begin{equation}\label{eq:free-loewner-primal}
\begin{aligned}
 \operatorname*{minimize}_{Q,c} \ \quad & -\log\det Q \\
 \text{subject to}\quad
   &(a_i-c)^\top Q(a_i-c)\le1\,,
   \qquad i=1,\ldots,m\,,\\
   &Q\succ0,\qquad c\in\R^n\,.
\end{aligned}
\end{equation}
This formulation is standard \citep{Todd2016}.

To motivate our center-regularized version of the MVCE problem, suppose for the sake of argument that a point $x^h$ with a high symmetry value $\sym(x^h, S)$ is known (e.g., $x^h$ could be a Minkowski center of $S$), and that $S$ has been translated by $x^h$ so that $\sym(0,S)$ is high.  Then the origin lies ``deep'' inside $S$.
This suggests regularizing the ellipsoid center of the MVCE toward the origin while allowing the ellipsoid center and its shape to adjust jointly.  For a regularization parameter $\lambda\ge0$, we therefore consider the following
center-regularized MVCE problem:  
\begin{equation}\tag{$\mathrm{P}_\lambda$}\label{eq:center-regularized-loewner}
\stepcounter{equation}
\begin{aligned}
 \operatorname*{minimize}_{Q,c} \ \quad & -\log\det Q \\
 \text{subject to}\quad
   &(a_i-c)^\top Q(a_i-c)
     +\lambda c^\top Qc\le1\,,
     \qquad i=1,\ldots,m\,,\\
   &Q\succ0,\qquad c\in\R^n\,.
\end{aligned}
\end{equation}
Compared with \eqref{eq:free-loewner-primal}, the additional term in each constraint discourages the center
$c$ from moving away from the origin.  For every feasible $(Q,c)$ it still holds that $S-c\subseteq{E}_Q$, and the
objective function still minimizes the volume of ${E}_Q$.  At $\lambda=0$ \eqref{eq:center-regularized-loewner} specializes to \eqref{eq:free-loewner-primal}, while at $\lambda=+\infty$ \eqref{eq:center-regularized-loewner} is equivalent to fixing $c=0$.
 
It will be useful to set the regularization parameter $\lambda$ in accordance with the symmetry of the origin:
\begin{equation}\label{eq:alpha-lambda}
    \alpha:=\sym(0,S)\,,
    \qquad
    \lambda_\alpha:=\frac{1+\alpha}{1-\alpha}\,,
\end{equation}
where in the case when $\alpha = 1$ we define $\lambda_1:= +\infty$. Geometrically, a larger value of $\alpha$ means that the origin is more centrally located in $S$. We therefore treat higher symmetry $\alpha$ as
stronger geometric information that the origin should be a good ellipsoidal rounding center, and so penalize deviations from the origin more heavily.  When $S$ is centrally symmetric about the origin,
$\lambda_\alpha=+\infty$, and the ellipsoid center $c$ is fixed at the origin in \eqref{eq:center-regularized-loewner}. We now present the following rounding results regarding optimal solutions of \eqref{eq:center-regularized-loewner}, the proof of which will be the subject of most of this section.

\begin{lemma}\label{lm:rounding-w-representation}
Let $S$ be a polytope defined as in \cref{eq:global-polytope-setup} and suppose that $0 \in \intt S$, and let $\alpha$ and
$\lambda_\alpha$ be defined as in \cref{eq:alpha-lambda}.  The center-regularized problem
\eqref{eq:center-regularized-loewner} with $\lambda=\lambda_\alpha$ attains its optimal value.  Every optimal
solution $(Q^\star,c^\star)$ satisfies 
\begin{equation*}
 \sqrt{\frac\alpha n}\,{E}_{Q^\star}
 \subseteq S-c^\star
 \subseteq{E}_{Q^\star}\,.
\end{equation*}
Moreover, the center $c^\star$ satisfies $c^\star \in \tfrac{1-\alpha}{2}S$. Furthermore, when $\alpha < 1$, the ellipsoid of the outer containment can be tightened to
$ \sqrt{1-\lambda_\alpha(c^\star)^\top Q^\star c^\star}\,{E}_{Q^\star}$.  

\end{lemma}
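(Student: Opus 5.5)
The plan is to convexify \eqref{eq:center-regularized-loewner}, read off its KKT conditions, and turn those conditions into a one-dimensional inequality about the support of $S$ in each direction. I expect that inequality, not the convex-analytic setup, to be the crux. Throughout, $\lambda=\lambda_\alpha$ with $\alpha<1$; the case $\alpha=1$ forces $c=0$, and the argument below then reduces to the classical symmetric Löwner analysis.

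\textbf{Convexification and KKT.} Substitute $d:=Qc$. Expanding the constraint gives
\[
a_i^\top Qa_i-2a_i^\top d+(1+\lambda)\,d^\top Q^{-1}d\le 1 .
\]
This is jointly convex in $(Q,d)$, since $d^\top Q^{-1}d$ is a matrix-fractional function and is representable through a Schur complement. The objective $-\log\det Q$ is convex as well.

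\emph{Attainment.} Because $0\in\intt S$, the $a_i$ span $\R^n$ with a positive combination. Hence feasibility forces a uniform bound on $Q$, and on $c$ via the $\lambda c^\top Qc$ term. Boundedness of the $a_i$ gives a feasible point with $\det Q>0$. So the sublevel sets are compact and the optimum is attained. Slater's condition holds (take $c=0$ and $Q$ small), so KKT multipliers $u\ge0$ exist.

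\emph{Stationarity in $d$.} This gives $(1+\lambda)\bigl(\sum_i u_i\bigr)c=\sum_i u_ia_i$.

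\emph{Stationarity in $Q$.} This gives
\[
Q^{-1}=\sum_i u_ia_ia_i^\top-(1+\lambda)\Bigl(\sum_i u_i\Bigr)cc^\top .
\]
Taking the trace against $Q$ and using complementary slackness yields $\sum_iu_i=n$.

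\emph{Center claim.} Since $1+\lambda_\alpha=\tfrac{2}{1-\alpha}$, we get $c^\star=\tfrac{1-\alpha}{2}\bar a$, where $\bar a:=\tfrac1n\sum u_ia_i\in S$. This proves $c^\star\in\tfrac{1-\alpha}{2}S$.

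\emph{Outer containment.} Feasibility at each $a_i$ gives $(a_i-c^\star)^\top Q^\star(a_i-c^\star)\le 1-\lambda c^{\star\top}Q^\star c^\star$. By convexity this yields the tightened outer containment, and a fortiori $S-c^\star\subseteq E_{Q^\star}$.

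\textbf{Inner containment.} Fix $h$ with $h^\top (Q^\star)^{-1}h=1$; these are exactly the directions whose support value on $E_{Q^\star}$ is $1$. It suffices to show $\max_{x\in S}h^\top(x-c^\star)\ge\sqrt{\alpha/n}$ for every such $h$; replacing $h$ by $-h$ covers the other side. Introduce
\[
\beta:=\max_S h^\top x,\qquad \gamma:=-\min_S h^\top x,\qquad t_i:=h^\top a_i\in[-\gamma,\beta],\qquad \mu:=h^\top\bar a .
\]
Then $h^\top c^\star=\mu/(1+\lambda)$. Symmetry of the origin, $-\alpha S\subseteq S$, gives $\alpha\gamma\le\beta$ and $\alpha\beta\le\gamma$. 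The $Q$-stationarity identity evaluated at $h$ reads
\[
1=\sum_iu_it_i^2-(1+\lambda)\,n\,(h^\top c^\star)^2 .
\]
The second moment of a distribution on $[-\gamma,\beta]$ with mean $\mu$ is at most $(\beta-\gamma)\mu+\beta\gamma$. Therefore
\[
\tfrac1n\le(\beta-\gamma)\mu+\beta\gamma-\frac{\mu^2}{1+\lambda}.
\]
We must then show that this forces $\beta-\tfrac{\mu}{1+\lambda}\ge\sqrt{\alpha/n}$. The symmetric statement for $\gamma+\tfrac{\mu}{1+\lambda}$ follows in the same way.

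\textbf{Main obstacle.} The key step is this two-sided scalar inequality, over $\mu\in[-\gamma,\beta]$ and $\beta,\gamma$ constrained by $\alpha\beta\le\gamma\le\beta/\alpha$. I expect the specific choice $\lambda_\alpha=\tfrac{1+\alpha}{1-\alpha}$ to be exactly what makes the worst case equal $\alpha/n$.

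\emph{First attempt.} Set $s:=\beta-\tfrac{\mu}{1+\lambda}$ and suppose $s^2<\alpha/n$. Maximize the right-hand side over $\gamma\le\beta/\alpha$ (it is increasing in $\gamma$ when $\beta\ge\mu$) and over $\mu$ with $s$ held fixed. The goal is to show the maximum is at most $s^2/\alpha$, by completing the square in $\mu$. If the direct algebra is messy, I will instead normalize $\beta=1$ and treat the bound as a quadratic in $\mu$ for each $\gamma$. The extremal configuration should be the one attained in the sharpness examples of \Cref{sec:sharpness-family}.
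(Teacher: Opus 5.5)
Your proposal is correct and is essentially the paper's proof. Your convexification via $d=Qc$ differs from the paper's $M=Q^{1/2}$, $z=Mc$ only cosmetically, giving the same KKT identities ($e^\top u=n$, $c^\star=\tfrac{1-\alpha}{2}\bar a$), and your second-moment bound with $\gamma\le\beta/\alpha$ is the paper's pointwise inequality $(1-t_i)(1+\alpha t_i)\ge0$ after normalizing $\beta=1$. The step you flag as the main obstacle closes exactly as you planned: since $\mu\le\beta$ the bound is nondecreasing in $\gamma$, and setting $\gamma=\beta/\alpha$, $\tfrac{1}{1+\lambda_\alpha}=\tfrac{1-\alpha}{2}$ and $\beta=s+\tfrac{1-\alpha}{2}\mu$ gives
\[
\frac1n\le\frac{s^2}{\alpha}-\frac{1-\alpha^2}{4\alpha}\,\mu^2\le\frac{s^2}{\alpha},
\qquad s\ge\tfrac{1+\alpha}{2}\,\beta>0,
\]
so $s\ge\sqrt{\alpha/n}$; the term linear in $\mu$ cancels precisely because of the choice of $\lambda_\alpha$, and this is the paper's bound \eqref{eq:inner-square-bound} after rescaling.
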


Consider the case when $S$ has been translated so that the origin is a Minkowski center (so that $\alpha = \sym(S)$). Then applying \Cref{lm:rounding-w-representation} yields a 
$\sqrt{\frac{n}{\sym(S)}}$-rounding of $S$ and thus proves \Cref{thm:global-rounding} when $S$ is a polytope. 
\Cref{lm:rounding-w-representation} is more general than \Cref{thm:global-rounding} because it allows the origin to be {\em any} interior point of $S$, not necessarily a Minkowski center. Indeed, if $\sym(0,S) > \frac{1}{n}$, the center-regularized MVCE problem yields a $\sqrt{\frac{n}{\sym(0,S)}}$-rounding of $S$ whose rounding guarantee is strictly better than the general $n$-rounding guarantee of the MVCE.

Note that the optimized center $c^\star$ of the center-regularized MVCE might not be a Minkowski center, nor the origin, nor the center of the ordinary MVCE. \citet{BelloniFreund2008} show that the MVCE center $x_L$ yields a $\sqrt{\tfrac{n}{\sym(x_L,S)}}$-rounding, whereas \Cref{lm:rounding-w-representation} gives a $\sqrt{\tfrac{n}{\sym(0,S)}}$-rounding of $S$ centered at $c^\star$ where notably, $c^\star$ need not equal $0$. Intuitively, the greater the symmetry of the origin is, the closer $c^\star$ will be to the origin, which is embodied in the property that $c^\star \in \tfrac{1-\alpha}{2}S$.

The center-regularized MVCE is a generalization of the MVCE. If $0$ is a Minkowski center of $S$ and $\lambda$ is chosen as in \eqref{eq:alpha-lambda}, then the center-regularized MVCE recovers the MVCE at the lower and upper extrema of $\sym(S)$. When $\sym(S)=1$, then $S$ is centrally symmetric about the origin (which is the Minkowski center). When $\sym(S)=\frac{1}{n}$, $S$ must be a simplex, whose Minkowski center (the origin) is also the center of the MVCE. In both cases, the center of the MVCE must be the origin. Thus, the optimal solution  $(Q^\star,c^\star)$ of \eqref{eq:free-loewner-primal} is still feasible for \eqref{eq:center-regularized-loewner} because $c^\star=0$. This means $(Q^\star,c^\star)$ is also an optimal solution of \eqref{eq:center-regularized-loewner} for both extrema of $\sym(S)$. Furthermore, since every feasible solution of \eqref{eq:center-regularized-loewner} is also feasible for \eqref{eq:free-loewner-primal}, the optimal solution of \eqref{eq:center-regularized-loewner} must be $(Q^\star,c^\star)$ as well for both extrema of $\sym(S)$. Additionally, \Cref{lm:rounding-w-representation} also recovers the classical MVCE rounding results for both general polytopes as well as centrally symmetric polytopes.

The center-regularized problem \eqref{eq:center-regularized-loewner} can be transformed into a convex problem by a change of variables.
For $0\le\lambda<+\infty$, make the change of variables
\begin{equation}\label{eq:square-root-variables}
 M:=Q^{\tfrac12}\,,
 \quad\text{and}\quad
 z:=Mc\,,
\end{equation}
which is the standard convexifying transformation used for the MVCE as in \citet{SunFreund2004}. 
Under this change of variables, problem \eqref{eq:center-regularized-loewner} is equivalent to the convex problem
\begin{equation}\tag{$\widetilde{\mathrm P}_\lambda$}\label{eq:transformed-primal}
\stepcounter{equation}
\begin{aligned}
 \operatorname*{minimize}_{M,z}\quad &-2\log\det M\\
 \text{subject to}\quad
 &\lVert Ma_i-z\rVert_2^2+\lambda\lVert z\rVert_2^2\le1\,,
 \qquad i=1,\ldots,m\,,\\
 &M \succ 0 \,,
 \qquad z\in\R^n\,.
\end{aligned}
\end{equation} 
From any solution $(M,z)$ of \Cref{eq:transformed-primal}, the original variables are recovered using $Q=M^2$ and $c=M^{-1}z$.
At the endpoint $\lambda=+\infty$, one sets $z=0$ in \eqref{eq:transformed-primal}, leaving $M$ as the only free variable.

We prove \Cref{lm:rounding-w-representation} in the rest of this section.  
For notational convenience, collect the listed points in the column-stacked matrix
\begin{equation*}
 A:=[a_1,\ldots,a_m]\in\R^{n\times m}\,.
\end{equation*}

\subsection{Lagrange dual and optimality conditions of \texorpdfstring{\Cref{eq:transformed-primal}}{the transformed primal problem}}

Let $0\le\lambda<+\infty$, and let $e\in\R^m$ denote the vector of all ones.  Assigning a multiplier
$u_i\ge0$ to the $i$th constraint and using standard
Lagrangian duality yields the following dual problem of \eqref{eq:transformed-primal}:
\begin{equation}\tag{$\mathrm D_\lambda$}\label{eq:lagrange-dual}
\stepcounter{equation}
\begin{aligned}
\operatorname*{maximize}_{u\in\R_+^m}\quad
&n+\log\det\left(
A\operatorname{Diag}(u)A^\top
-\frac{(Au)(Au)^\top}{(1+\lambda)e^\top u}
\right)-e^\top u\\
\text{subject to}\quad
&e^\top u>0\,,\\
&A\operatorname{Diag}(u)A^\top
-\frac{(Au)(Au)^\top}{(1+\lambda)e^\top u}\succ0\,, 
\end{aligned}
\end{equation}
which is a minor extension of the MVCE dual formulations in \citet{SunFreund2004,Guertuna2010}. The primal problem \eqref{eq:transformed-primal} attains its optimal value because its sublevel sets are compact.
In addition, the Slater condition holds since sufficiently small positive multiples of $(I,0)$ are strictly feasible for
\eqref{eq:transformed-primal}, whereby strong duality and dual attainment both hold, and the Karush--Kuhn--Tucker optimality conditions are both necessary and sufficient.

Accordingly, $(M,z)$ and the dual multiplier $u$ are optimal for \eqref{eq:transformed-primal} if and only if the following are satisfied:
\begingroup
\begin{flalign}
 &\text{Primal feasibility:}\quad
 M\succ0\,,\quad z\in\R^n\,,\quad
   \lVert Ma_i-z\rVert_2^2+\lambda\lVert z\rVert_2^2\le1
   \quad\text{for }i=1,\ldots,m\,, &&\label{eq:optimality-primal-feasibility}\\
 &\text{Dual feasibility:}\quad
 u\in\R_+^m\,,\quad e^\top u>0\,,\quad
   A\operatorname{Diag}(u)A^\top
   -\frac{(Au)(Au)^\top}{(1+\lambda)e^\top u}\succ0\,, &&\label{eq:optimality-dual-feasibility}\\
 &\text{Stationarity:}\quad
 z=\frac{MAu}{(1+\lambda)e^\top u}\,,\qquad
   M^2=\left(
   A\operatorname{Diag}(u)A^\top
   -\frac{(Au)(Au)^\top}{(1+\lambda)e^\top u}
   \right)^{-1}\,, &&\label{eq:optimality-stationarity}\\
 &\text{Complementary slackness:}\quad
 u_i\bigl(\lVert Ma_i-z\rVert_2^2
   +\lambda\lVert z\rVert_2^2-1\bigr)=0
   \quad\text{for }i=1,\ldots,m\,. &&\label{eq:optimality-complementary-slackness}
\end{flalign}
\endgroup
At $\lambda=+\infty$, the same
conditions apply after omitting $z$ and all terms containing $z$, and replacing the matrix $A\operatorname{Diag}(u)A^\top
   -\tfrac{(Au)(Au)^\top}{(1+\lambda)e^\top u}$ in
\cref{eq:optimality-dual-feasibility,eq:optimality-stationarity} by
$A\operatorname{Diag}(u)A^\top$.

Similar to the MVCE problem, an immediate consequence of the optimality conditions is that $\sum_i u^\star_i$ is not free.  Let $(M^\star,z^\star,u^\star)$ be an optimal primal-dual triple.  Summing the complementary-slackness equations
\eqref{eq:optimality-complementary-slackness} and then using stationarity \eqref{eq:optimality-stationarity} yields
\begin{align}
 0
 &=\sum_{i=1}^m u_i^\star
 \bigl(
 \lVert M^\star a_i-z^\star\rVert_2^2
 +\lambda\lVert z^\star\rVert_2^2-1
 \bigr)  =\tr\left(M^\star\left[
 A\operatorname{Diag}(u^\star)A^\top
 -\frac{(Au^\star)(Au^\star)^\top}{(1+\lambda)e^\top u^\star}
 \right]M^\star\right)-e^\top u^\star \notag\\
 &=\tr\left(\left[
 A\operatorname{Diag}(u^\star)A^\top
 -\frac{(Au^\star)(Au^\star)^\top}{(1+\lambda)e^\top u^\star}
 \right](M^\star)^2\right)-e^\top u^\star
 =n-e^\top u^\star\,.
\end{align}
Hence any optimal $u^\star$ must satisfy $e^\top u^\star=n$.  The same calculation proves this identity
at $\lambda=+\infty$.  An analogous identity holds for the MVCE problem \citep{SunFreund2004}.

Thus every dual optimizer lies on the slice $e^\top u=n$.  For $u$ on this slice, write
\begin{equation*}
 \tilde u:=\frac un  \qquad \text{and}\qquad
 \widetilde U:=\operatorname{Diag}(\tilde u)\,,
\end{equation*}
and then $e^\top \tilde{u}  = 1$.
Since every dual optimizer must satisfy $e^\top \tilde{u}  = 1$, restricting the dual in this way leaves its optima unchanged and so we consider the ``reduced dual'' problem:
\begin{equation}\tag{$\mathrm D_\lambda^{\rm red}$}\label{eq:reduced-dual}
\stepcounter{equation}
\begin{aligned}
 \operatorname*{maximize}_{\tilde u\in\R_+^m}\quad
 &n\log n+\log\det\left(
 A\widetilde U A^\top-\frac1{1+\lambda}(A\tilde u)(A\tilde u)^\top
 \right)\\
 \text{subject to}\quad
 &e^\top\tilde u=1\, , \quad
 A\widetilde U A^\top-\frac1{1+\lambda}(A\tilde u)(A\tilde u)^\top\succ0\,.
\end{aligned}
\end{equation}
We use the convention $
 \tfrac1{1+\infty}:=0$ to define $(\mathrm D_\infty^{\rm red})$ for consistency. 
Let us set the notation:
\begin{equation*}
 Q_{\tilde u,\lambda}
 :=A\widetilde U A^\top-\frac1{1+\lambda}(A\tilde u)(A\tilde u)^\top\,
\end{equation*}
for $0\le\lambda\le+\infty$.
Then a pair $(M,z)$ and a
vector $\tilde u$ are optimal for \eqref{eq:transformed-primal} and \eqref{eq:reduced-dual} if and only if
$(M,z)$ and $u=n\tilde u$ satisfy the optimality conditions
\eqref{eq:optimality-primal-feasibility}--\eqref{eq:optimality-complementary-slackness}.  In particular, the
stationarity relations \eqref{eq:optimality-stationarity} become
\begin{equation*}
 z=\frac1{1+\lambda}MA\tilde u\,, \qquad
 M^2=\bigl(nQ_{\tilde u,\lambda}\bigr)^{-1} \,.
\end{equation*}
Using $Q=M^2$ and $c=M^{-1}z$, these relations become
\begin{equation}\label{eq:reduced-dual-stationarity-2}
 c=\frac1{1+\lambda}A\tilde u\,, \qquad
 Q^{-1}=nQ_{\tilde u,\lambda}\,.
\end{equation}
At $\lambda=+\infty$, the same statement holds after
omitting $z$, $c$, and all terms where they appear.

\begin{remark}[Connection with $D$-optimal design]

The duality between MVCE problems and
$D$-optimal experimental design is classical
\citep{titterington1975optimal,Guertuna2010}.
For the origin-centered MVCE problem, the dual is the standard
$D$-optimal-design problem on the points $a_1,\ldots,a_m$,
with objective $\det(A\widetilde U A^\top)$.
For the free-center MVCE problem, after the affine lifting $\widehat a_i :=(a_i^\top,1)^\top$, the dual problem is also a standard $D$-optimal-design problem on the lifted
points $\widehat a_1,\ldots,\widehat a_m$.

More generally, let $\widehat A$ denote the column-stacked matrix of $\widehat a_1,\ldots,\widehat a_m$.  Then, for finite $\lambda$, it holds that $\det\!\left(
    \widehat A\widetilde U\widehat A^\top
    +\lambda e_{n+1}e_{n+1}^\top
\right)=(1+\lambda)\det Q_{\tilde u,\lambda}$ (where $e_{n+1}$ is the unit vector in $\R^{n+1}$ with $1$ in its last component).
Thus $(D_\lambda^{\mathrm{red}})$ can be viewed as a modified
$D$-optimal-design problem with an additional fixed information term in the last lifted coordinate.   In this way $(D_\lambda^{\mathrm{red}})$ interpolates between
the $D$-optimal design on the lifted points at $\lambda=0$
and the $D$-optimal design on the original points as
$\lambda\to+\infty$.

The reduced dual $(D_\lambda^{\mathrm{red}})$ also has nice properties for algorithms.  For every
finite $\lambda$, $(D_\lambda^{\mathrm{red}})$ falls within the framework of
log-homogeneous-barrier problems of \citet{dvurechensky2023generalized,zhao2023analysis,zhao2026new}, so the
Frank--Wolfe method applies directly. The earlier
Frank--Wolfe method of \citet{AhipasaogluSunTodd2008} also applies to
the $D$-optimal-design problems at the two extrema of $\lambda$.
\end{remark}

\subsection{Outer and inner ellipsoids from primal and dual feasibility}

In this subsection we present outer and inner ellipsoidal containments obtained from \eqref{eq:center-regularized-loewner} and \eqref{eq:lagrange-dual}.

\begin{lemma}\label{lem:outer-containment}
Let $0\le\lambda<+\infty$.  Every feasible $(Q,c)$ of \eqref{eq:center-regularized-loewner} satisfies
\begin{equation*}
 S-c
 \subseteq\sqrt{1-\lambda c^\top Qc}\,{E}_Q
 \subseteq{E}_Q\,.
\end{equation*}
In the case of $\lambda=+\infty$, the center $c$ is fixed at zero and $S\subseteq{E}_Q$.
\end{lemma}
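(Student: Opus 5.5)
The plan is to read the feasibility constraints of \eqref{eq:center-regularized-loewner} directly as containments of the points $a_i$, and then pass from the points to $S$ by convexity. Fix a feasible $(Q,c)$ with $0\le\lambda<+\infty$, and set $r:=1-\lambda c^\top Qc$. For each $i$, feasibility gives
\begin{equation*}
 (a_i-c)^\top Q(a_i-c)\le 1-\lambda c^\top Qc = r\,.
\end{equation*}
Since the left-hand side is nonnegative, this already shows $r\ge 0$. Because $Q\succ0$ and $\lambda\ge0$, we also have $\lambda c^\top Qc\ge0$, so $r\le1$.

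Next I translate this into ellipsoid membership. If $r>0$, the inequality says $a_i-c\in\sqrt{r}\,{E}_Q$, since $x\in tE_Q$ iff $x^\top Qx\le t^2$ for $t>0$. If $r=0$, it forces $a_i-c=0$ for every $i$. That would make $S$ a single point, contradicting that $S$ is a convex body (in fact $0\in\intt S$). So the degenerate case cannot occur, and we may take $r>0$. With the convention $0\cdot E_Q=\{0\}$, the statement would hold trivially in that case anyway.

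The set $\sqrt{r}\,{E}_Q$ is convex. Translating by $c$ preserves convex hulls, so $S-c=\conv\{a_1-c,\ldots,a_m-c\}\subseteq\sqrt r\,{E}_Q$. Because $0\le r\le1$ and ${E}_Q$ is convex and contains the origin, we get $\sqrt r\,{E}_Q\subseteq{E}_Q$, which is the second inclusion.

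For $\lambda=+\infty$, the convention fixes $c=0$ and the constraints reduce to $a_i^\top Qa_i\le1$. The same convexity argument then gives $S\subseteq{E}_Q$.

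No step is a genuine obstacle here. The only points needing care are the sign check $r\ge0$ (which comes free from feasibility) and the degenerate case $r=0$.
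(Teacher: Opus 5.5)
Your proof is correct and follows the paper's argument essentially step for step. You read each constraint as $(a_i-c)^\top Q(a_i-c)\le 1-\lambda c^\top Qc$, note that the right-hand side lies in $[0,1]$, and pass from the points $a_i-c$ to their convex hull $S-c$; your extra check that the degenerate case $1-\lambda c^\top Qc=0$ cannot occur (since $Q\succ0$ would force $S=\{c\}$) is a small tidying step the paper leaves implicit.
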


\begin{proof}
For finite $\lambda$, primal feasibility yields
$(a_i-c)^\top Q(a_i-c)\le1-\lambda c^\top Qc$ for every $i$.  The right-hand side is nonnegative because
the left-hand side is nonnegative, and it is at most $1$ because $\lambda c^\top Qc\ge0$.  Hence the middle
ellipsoid contains every $a_i-c$ and therefore their convex hull which is $S-c$.  Also the middle ellipsoid is itself contained in ${E}_Q$. For $\lambda=+\infty$, the center is fixed at zero and the constraints directly yield
$a_i\in{E}_Q$ for every $i$, hence $S\subseteq{E}_Q$.
\end{proof}

As in \eqref{eq:alpha-lambda} let us assign $\alpha:=\sym(0,S)$ and $\lambda:=\lambda_\alpha$, from which it follows that
\begin{equation}\label{eq:alpha-lambda-coefficient}
 \frac1{1+\lambda_\alpha}=\frac{1-\alpha}{2}\,.
\end{equation}
\begin{lemma}\label{lem:dual-inner}
Let $\tilde u$ be feasible for $(\mathrm D_{\lambda_\alpha}^{\rm red})$, and assign
$c:=\tfrac{1-\alpha}{2}A\tilde u$. 
Then $Q_{\tilde u,\lambda_\alpha}\succ0$.  Write
$\widehat Q:=Q_{\tilde u,\lambda_\alpha}$.  We have
\begin{equation}\label{eq:dual-feasible-inner-containment}
 \sqrt\alpha\,{E}_{\widehat Q^{-1}}
 =\left\{x\in\R^n:\ x^\top \widehat Q^{-1}x\le\alpha\right\}
 \subseteq S-c\,.
\end{equation}
Moreover, the center $c$ satisfies $c \in \tfrac{1-\alpha}{2}S$.
\end{lemma}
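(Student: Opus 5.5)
The plan is to verify the containment \eqref{eq:dual-feasible-inner-containment} through support functions in each direction, after first disposing of positive definiteness and of the location of $c$. Throughout, write $\beta:=\tfrac{1-\alpha}{2}=\tfrac1{1+\lambda_\alpha}$ by \eqref{eq:alpha-lambda-coefficient}. The case $\alpha=1$ is covered by the convention $\beta=0$.

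\emph{The center and positive definiteness.} Since $\tilde u\ge0$ and $e^\top\tilde u=1$, $A\tilde u$ is a convex combination of the $a_i$, so $A\tilde u\in S$ and hence $c=\beta A\tilde u\in\beta S=\tfrac{1-\alpha}{2}S$. For positive definiteness, fix $y\neq0$, set $b_i:=y^\top a_i$ and $\mu:=\sum_i\tilde u_i b_i=y^\top A\tilde u$. Then
\[
y^\top\widehat Q y=\sum_i\tilde u_i b_i^2-\beta\mu^2=\sum_i\tilde u_i(b_i-\mu)^2+(1-\beta)\mu^2\ge0 .
\]
Equality would force all $b_i$ with $\tilde u_i>0$ to equal $\mu$, and also $\mu=0$ (as $1-\beta>0$). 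This should be excluded using $0\in\intt S$ together with the fact that the points in the support of $\tilde u$ span, which is part of the dual feasibility assumption. Dual feasibility of $\tilde u$ already gives $\widehat Q\succ0$ directly, so this step merely confirms the claim.

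\emph{Containment via support functions.} The left set in \eqref{eq:dual-feasible-inner-containment} is the ellipsoid $\{x:x^\top\widehat Q^{-1}x\le\alpha\}$, whose support function in direction $y$ is $\sqrt{\alpha\,y^\top\widehat Q y}$. Since $S-c$ is closed and convex, it suffices to show, for every $y\neq0$,
\[
\alpha\bigl(\textstyle\sum_i\tilde u_ib_i^2-\beta\mu^2\bigr)\le(h-\beta\mu)^2 ,\qquad h:=\max_i b_i=h_S(y),
\]
together with $h-\beta\mu\ge0$. The symmetry hypothesis $-\alpha S\subseteq S$ gives $-\alpha a_i\in S$, hence $-\alpha b_i\le h$. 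Combined with $b_i\le h$, this yields $b_i\in[-h/\alpha,\,h]$ with $h>0$ because $0\in\intt S$. The key scalar inequality is $(h-b_i)(b_i+h/\alpha)\ge0$, that is,
\[
b_i^2\le h^2/\alpha-hb_i\,(1-\alpha)/\alpha .
\]
Averaging with weights $\tilde u_i$ and multiplying by $\alpha$ gives $\alpha\sum_i\tilde u_ib_i^2\le h^2-2\beta h\mu$. Therefore
\[
(h-\beta\mu)^2-\alpha\Bigl(\sum_i\tilde u_ib_i^2-\beta\mu^2\Bigr)\ge\beta^2\mu^2+\alpha\beta\mu^2\ge0 .
\]
Finally, $h-\beta\mu\ge h-\beta h\ge0$ since $\mu\le h$ and $\beta\le\tfrac12$. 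Taking square roots yields $\sqrt{\alpha\,y^\top\widehat Qy}\le h_S(y)-y^\top c=h_{S-c}(y)$ for all $y$, which proves the containment.

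The main obstacle is finding the right one-dimensional inequality. The idea is to replace the genuinely $n$-dimensional containment by the quadratic bound coming from the interval $[-h/\alpha,h]$ that the symmetry measure imposes on each projection $b_i$. It must then be checked that the specific coefficient $\beta=\tfrac{1-\alpha}{2}$ makes the leftover term $\beta(\alpha+\beta)\mu^2$ nonnegative; this is exactly where the choice $\lambda=\lambda_\alpha$ enters.
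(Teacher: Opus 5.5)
Your proof is correct and follows essentially the same route as the paper's. Both reduce to support functions, use $-\alpha S\subseteq S$ to get the per-point chord inequality $(h-b_i)(b_i+h/\alpha)\ge0$, average it with weights $\tilde u_i$, and complete the square to leave $\beta(\alpha+\beta)\mu^2=\tfrac{1-\alpha^2}{4}\mu^2\ge0$. The differences are cosmetic: you keep $h=h_S(y)$ instead of normalizing it to $1$, and your detour on positive definiteness is unnecessary because $\widehat Q\succ0$ is itself a constraint of the reduced dual.

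One correction to your closing remark. The specific value $\beta=\tfrac{1-\alpha}{2}$ is needed so that the sign-indefinite cross term $2\beta h\mu$ cancels exactly against $(1-\alpha)h\mu$. The leftover $\beta(\alpha+\beta)\mu^2$, by contrast, would be nonnegative for any $\beta\ge0$.
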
  

\begin{proof}
Since $\tilde u\ge0$ and $e^\top\tilde u=1$, we have $A\tilde u\in S$.  Consequently,
$c=\tfrac{1-\alpha}{2}A\tilde u\in\tfrac{1-\alpha}{2}S$.  

Feasibility for
$(\mathrm D_{\lambda_\alpha}^{\rm red})$ implies $\widehat Q\succ0$.  By the definition of $\widehat Q$ and
\eqref{eq:alpha-lambda-coefficient}, we also have
$\widehat Q=A\widetilde U A^\top-\tfrac{1-\alpha}{2}(A\tilde u)(A\tilde u)^\top$.  To prove
\eqref{eq:dual-feasible-inner-containment}, it follows from a separating hyperplane argument that it is sufficient to show that for every $y\in\R^n$ the following inequality holds:
\begin{equation}\label{eq:inner-directional-goal}
 \max_{x\in\sqrt\alpha\,{E}_{\widehat Q^{-1}}}y^\top x
 \le \max_{x\in S-c}y^\top x\,.
\end{equation}
The case $y=0$ is immediate.  For $y\ne0$, the quantity $\max_i y^\top a_i$ is positive because
$0\in\intt S$.  Both sides of \eqref{eq:inner-directional-goal} are positively homogeneous in $y$, so, after
replacing $y$ by $\tfrac{y}{\max_i y^\top a_i}$, we may assume without loss of generality that
$\max_i y^\top a_i=1$.

For every $x$ in the ellipsoid on the left-hand side in \eqref{eq:inner-directional-goal}, the Cauchy--Schwarz inequality
yields
\begin{equation*}
 y^\top x
 =\bigl(\widehat Q^{\tfrac12}y\bigr)^\top\bigl(\widehat Q^{-\tfrac12}x\bigr)
 \le\sqrt{y^\top \widehat Qy}\sqrt{x^\top \widehat Q^{-1}x}
 \le\sqrt{\alpha y^\top \widehat Qy}\,,
\end{equation*}
with equality attained above for
$x=\tfrac{\sqrt\alpha\,\widehat Qy}{\sqrt{y^\top \widehat Qy}}$.  Hence the left-hand side of \eqref{eq:inner-directional-goal} is
\begin{equation}\label{eq:inner-directional-maximum-left}
 \max_{x\in\sqrt\alpha\,{E}_{\widehat Q^{-1}}}y^\top x
 =\sqrt{\alpha y^\top \widehat Qy}\,.
\end{equation}
For the right-hand side of \eqref{eq:inner-directional-goal},  
\begin{equation}\label{eq:inner-directional-maximum-right}
 \max_{x\in S-c}y^\top x
 =\max_{s\in S}y^\top(s-c)
 =\max_i y^\top a_i-y^\top c
 =1-\frac{1-\alpha}{2}y^\top A\tilde u\,,
\end{equation}
where the last equality uses $\max_i y^\top a_i=1$ and $c=\tfrac{1-\alpha}{2}A\tilde u$.

For a given index $i$ it holds that $y^\top a_i\le1$.  On the other hand, since $-\alpha S\subseteq S$, one also has
$-\alpha a_i\in S$, so $-\alpha y^\top a_i\le\max_j y^\top a_j=1$ and hence
$y^\top a_i\ge-\tfrac1\alpha$.  Write $t_i:=y^\top a_i$.  Then
\begin{equation}\label{eq:pointwise-chord}
 \alpha t_i^2+(1-\alpha)t_i\le1\,,
\end{equation}
because
$1-(\alpha t_i^2+(1-\alpha)t_i)=(1-t_i)(1+\alpha t_i)\ge0$ for
$-\tfrac1\alpha\le t_i\le1$.  Multiplying \eqref{eq:pointwise-chord} by $\tilde u_i$ and summing over
$i=1,\ldots,m$ yields
\begin{equation*}
\alpha\sum_{i=1}^m\tilde u_i(y^\top a_i)^2
 +(1-\alpha)\sum_{i=1}^m\tilde u_i y^\top a_i
 \le\sum_{i=1}^m\tilde u_i=1\,,
\end{equation*}
where the last equality uses $e^\top\tilde u=1$.  In matrix notation, using $A\tilde u=\sum_i\tilde u_i a_i$ and
$A\widetilde U A^\top=\sum_i\tilde u_i a_i a_i^\top$, this is
\begin{equation}
 \alpha y^\top A\widetilde U A^\top y+(1-\alpha)y^\top A\tilde u \le1\,.
 \label{eq:finite-integrated-chord}
\end{equation} 
Therefore
\begin{align}
 \alpha y^\top \widehat Qy
 &=\alpha y^\top A\widetilde U A^\top y
   -\frac{\alpha(1-\alpha)}2(y^\top A\tilde u)^2
 \le1-(1-\alpha)y^\top A\tilde u
   -\frac{\alpha(1-\alpha)}2(y^\top A\tilde u)^2 \notag\\
 &=\left(1-\frac{1-\alpha}{2}y^\top A\tilde u\right)^2
   -\frac{1-\alpha^2}{4}(y^\top A\tilde u)^2
 \le\left(1-\frac{1-\alpha}{2}y^\top A\tilde u\right)^2\,.
 \label{eq:inner-square-bound}
\end{align}
Here the first equality expands $\widehat Q=A\widetilde U A^\top-\tfrac{1-\alpha}{2}(A\tilde u)(A\tilde u)^\top$, the first inequality is \eqref{eq:finite-integrated-chord}, and the last inequality uses $0<\alpha\le1$.  It follows from \eqref{eq:inner-directional-maximum-left} that the left-hand side of \eqref{eq:inner-square-bound} is the square of the left-hand side of \eqref{eq:inner-directional-goal}.  And it follows from \eqref{eq:inner-directional-maximum-right} that the right-hand side of \eqref{eq:inner-square-bound} is the square of the right-hand side of \eqref{eq:inner-directional-goal}.  The first maximum in \eqref{eq:inner-directional-goal} is nonnegative; also $A\tilde u\in S$ and the normalization $\max_i y^\top a_i=1$ imply that
$1-\tfrac{1-\alpha}{2}y^\top A\tilde u\ge\tfrac{1+\alpha}{2}>0$ and so the second maximum in \eqref{eq:inner-directional-goal} is nonnegative.  Therefore taking square roots in \eqref{eq:inner-square-bound} yields \eqref{eq:inner-directional-goal}, and hence \eqref{eq:dual-feasible-inner-containment}.
\end{proof}

\subsection{Proof of \texorpdfstring{\Cref{lm:rounding-w-representation}}{the rounding lemma for the W-representation}}

\begin{proof}[Proof of \Cref{lm:rounding-w-representation}]
Let $(Q^\star,c^\star)$ be an optimal solution of $(\mathrm P_{\lambda_\alpha})$.  By the change of variables
\eqref{eq:square-root-variables} it follows that $M:= (Q^\star)^{\tfrac12}$ and $z:=Mc^\star$ are optimal solutions of \eqref{eq:transformed-primal} whereby there exists $u$ for which the triplet $(M,z,u)$ satisfies the optimality conditions \eqref{eq:optimality-primal-feasibility}, \eqref{eq:optimality-dual-feasibility}, \eqref{eq:optimality-stationarity}, and \eqref{eq:optimality-complementary-slackness}, and in fact as shown earlier it holds that $e^\top u=n$.  Therefore $\tilde u:=\tfrac{u}{n}$ is feasible for the
reduced dual problem $(\mathrm D_{\lambda_\alpha}^{\rm red})$, and also it follows from \eqref{eq:optimality-stationarity}, \eqref{eq:reduced-dual-stationarity-2}, and the notation $\widehat Q:=Q_{\tilde u,\lambda_\alpha}$ from \Cref{lem:dual-inner} that
\begin{equation}\label{eq:optimal-primal-dual-matching}
 c^\star=\frac{1}{1+\lambda_\alpha}A\tilde u\,,
 \qquad
 (Q^\star)^{-1}=M^{-2} = n Q_{\tilde u,\lambda_\alpha} = n \widehat Q \,.
\end{equation}
When $\alpha=1$, \eqref{eq:optimal-primal-dual-matching} remains valid: indeed, $\lambda_\alpha=+\infty$ fixes $c^\star=0$, and the endpoint stationarity condition yields $(Q^\star)^{-1}=nQ_{\tilde u,\infty}=n\widehat Q$.

It therefore follows from \Cref{lem:dual-inner} that $ \sqrt{\alpha} E_{\widehat Q^{-1}} \subset S - c^\star$.  Now note from \eqref{eq:optimal-primal-dual-matching} that $\widehat Q^{-1} = n Q^\star$ whereby $\sqrt{\alpha} E_{\widehat Q^{-1}} = \sqrt{\frac{\alpha}{n}} E_{Q^\star}$ and therefore
$$ \sqrt{\frac{\alpha}{n}} E_{Q^\star} \subset S - c^\star\,,$$ which is the inscribed inclusion in \Cref{lm:rounding-w-representation}.

The outer containment statements of \Cref{lm:rounding-w-representation} follow by applying \Cref{lem:outer-containment} to
$(Q^\star,c^\star)$.  It also follows from \eqref{eq:alpha-lambda-coefficient} and \eqref{eq:optimal-primal-dual-matching} that
$c^\star\in\tfrac{1-\alpha}{2}S$.
\end{proof}

\section{Proof of \texorpdfstring{\Cref{thm:global-rounding}}{the main theorem}}\label{sec:polytope-to-convex-body}

\Cref{lm:rounding-w-representation} provides the guaranteed $\sqrt{\frac{n}{\sym(S)}}$-rounding of \Cref{thm:global-rounding} in the case when $S$ is a polytope.  When $S$ is a general convex body we prove \Cref{thm:global-rounding} by approximating $S$ by a sequence of polytopes and taking limits.

\begin{proof}[Proof of \Cref{thm:global-rounding}]
For any $\varepsilon>0$ there exists a polytope $P$ that approximates $S$ in the sense that $S \subset P \subset S+\varepsilon \B$.  Let $\{P_k\}$ be a sequence of polytopes converging to $S$ such that
$S\subseteq P_k\subseteq S +\varepsilon_k \B$ for some sequence $\varepsilon_k\downarrow0$. After translating each $P_k$ so that a Minkowski center lies at the origin and applying \Cref{lm:rounding-w-representation} directly to $P_k$, it follows for each $k$ that there exist
 $Q_k\succ0$ and $c_k\in P_k$  such that
\begin{equation}\label{eq:convex-body-rounding-sandwich-2}
 \sqrt{\frac{\sym(P_k)}{n}}\,{E}_{Q_k}
 \subseteq P_k-c_k
 \subseteq{E}_{Q_k}\,.
\end{equation}
The centers $c_k$ are uniformly bounded, and the ellipsoids $E_{Q_k}$ are uniformly bounded and nondegenerate. Therefore, there exists a subsequence, indexed by $k_j$, such that $c_{k_j}\to c$ and $Q_{k_j}\to Q$ for some $c$ and $Q$. 
Furthermore, since $P_{k_j}\to S$,  it holds that $\sym(P_{k_j})\to\sym(S)$. Taking limits in
\eqref{eq:convex-body-rounding-sandwich-2} along this subsequence  yields $
 \sqrt{\frac{\sym(S)}{n}}\,{E}_Q
 \subseteq S-c
 \subseteq{E}_Q$.  
\end{proof}

\section{Center-regularized MVIE for a polytope in H-representation}\label{sec:h-representation}

In \Cref{sec:v-representation}, we constructed the rounding ellipsoid (the center-regularized MVCE) for a polytope in W-representation. This construction involves first translating the polytope to the origin by a Minkowski center, and then solving \eqref{eq:center-regularized-loewner} to obtain the ellipsoid.
A Minkowski center and the value $\sym(S)$ can be computed by linear programming when $S$ is given as the convex
hull \citep[see][]{BelloniFreund2008}. 
After translating such a center to the
origin, the change of variables turns
\eqref{eq:center-regularized-loewner} into \eqref{eq:transformed-primal}, a convex problem with a logarithmic determinant objective, $m$ second-order cone constraints, and one positive semidefinite cone constraint. This problem \eqref{eq:transformed-primal} can be
addressed by an interior point method \citep{VandenbergheBoydWu1998}.

However, in many cases in real-world applications, a polytope is given in H-representation, i.e., as the intersection of finitely many halfspaces. Transferring a polytope from an H-representation to a W-representation is generally a hard task and impractical. 
In this section, we construct a center-regularized version of the MVIE optimization problem for polytopes in H-representation.  The construction is analogous to the center-regularized MVCE problem, but is naturally better suited to polytopes in H-representation.

Suppose that $S$ contains the origin as an interior point and is described by the intersection of $\ell$ halfspaces, i.e., it is given in H-representation as
\begin{equation}\label{eq:john-h-representation}
 S=\left\{x\in\R^n\,\middle|\,h_i^\top x\le1,\quad i=1,\ldots,\ell\right\}\,,
\end{equation}
where $h_1,\ldots,h_\ell$ are the listed nonzero halfspace normals. Apparently, the origin is an interior point. 

We first recall the ordinary MVIE optimization problem.
For
$Q\succ0$, because $\max_{x\in {E}_Q}h_i^\top x=\sqrt{h_i^\top Q^{-1}h_i}$, we have:
\begin{equation}\label{eq:john-inscribed-condition}
\begin{aligned}
 c+{E}_Q\subseteq S
 &\quad\Longleftrightarrow\quad
 h_i^\top c+\sqrt{h_i^\top Q^{-1}h_i}\le1
 &&\text{for }i=1,\ldots,\ell\,,\\
 &\quad\Longleftrightarrow\quad
 h_i^\top Q^{-1}h_i\le(1-h_i^\top c)^2
 \quad\text{and}\quad h_i^\top c\le1
 &&\text{for }i=1,\ldots,\ell\,.
\end{aligned}
\end{equation} 
Therefore, the ordinary MVIE
is obtained from
\begin{equation}\label{eq:free-john-primal}
\begin{aligned}
 \operatorname*{minimize}_{X,c}\quad&-2\log\det X\\
 \text{subject to}\quad
 &\lVert Xh_i\rVert_2^2\le(1-h_i^\top c)^2\,,
 \qquad h_i^\top c\le1\,,
 \quad i=1,\ldots,\ell\,,\\
 &X\succ0,\qquad c\in\R^n\,.
\end{aligned}
\end{equation}
The corresponding ellipsoid is $c+{E}_{X^{-2}}$.  This convex formulation and a polynomial-time algorithm appear in \citet{KhachiyanTodd1993}. Related algorithms for solving it include
\citet{Anstreicher2002,ZhangGao2003}.

To regularize the center, we rewrite the first constraint in \eqref{eq:free-john-primal} as
$\lVert Xh_i\rVert_2^2-(h_i^\top c)^2+2h_i^\top c\le1$ and add the center penalty
$\lambda(h_i^\top c)^2$, which yields the following problem.
\begin{equation}\tag{$\mathrm J_\lambda$}\label{eq:john-primal}
\stepcounter{equation}
\begin{aligned}
 \operatorname*{minimize}_{X,c}\quad&-2\log\det X\\
 \text{subject to}\quad
 &\lVert Xh_i\rVert_2^2+(\lambda-1)(h_i^\top c)^2+2h_i^\top c\le1\,,
 \quad i=1,\ldots,\ell\,,\\
 &X\succ0,\qquad c\in\R^n\,.
\end{aligned}
\end{equation}
The objective maximizes the volume of ${E}_{X^{-2}}$.  We choose $\lambda\ge1$, so feasibility implies
$h_i^\top c<1$ for every $i$, so
$c\in\intt S$.  The objective and all
constraints in \eqref{eq:john-primal} are convex.  At $\lambda=+\infty$, we fix $c=0$ and impose
$\lVert Xh_i\rVert_2^2\le1$ for every $i$.

\begin{lemma}\label{lm:john-rounding}
Let $S$ be the polytope defined as in \cref{eq:john-h-representation}, and let $\alpha$ and $\lambda_\alpha$ be defined as in
\cref{eq:alpha-lambda}.  The center-regularized problem \eqref{eq:john-primal} with
$\lambda=\lambda_\alpha$ attains its optimal value.  Every optimal solution $(X^\star,c^\star)$, with
$Q^\star:=(X^\star)^{-2}$, satisfies
\begin{equation*}
 {E}_{Q^\star}
 \subseteq S-c^\star
 \subseteq\sqrt{\frac n\alpha}\,{E}_{Q^\star}\,.
\end{equation*}
Moreover, the ellipsoid in the inner containment can be enlarged to $
 \big(
  \min_{i}
  \frac{1-h_i^\top c^\star}{\lVert X^\star h_i\rVert_2}
 \big){E}_{Q^\star}$.
\end{lemma}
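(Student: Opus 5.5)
The plan is to work with the KKT system of the convex problem \eqref{eq:john-primal} at $\lambda=\lambda_\alpha$, in parallel with \Cref{sec:v-representation}. The role of \Cref{lem:dual-inner} is now played by the outer containment, and the role of \Cref{lem:outer-containment} by the inner one.

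\textbf{Attainment and optimality conditions.} Feasibility forces $h_i^\top c<1$, and boundedness of $S$ makes the sublevel sets compact, so the optimum is attained. A small multiple of $(I,0)$ is strictly feasible, so Slater holds and the KKT conditions are necessary. Attach multipliers $v_i\ge0$ to the constraints. Stationarity in $X$ reads
\[
2X^{-1}=\textstyle\sum_i v_i\,(Xh_ih_i^\top+h_ih_i^\top X).
\]
Since $X\succ0$, this Lyapunov equation has a unique symmetric solution, namely $H:=\sum_i v_ih_ih_i^\top=X^{-2}=Q^\star$. Stationarity in $c$ reads $\sum_i v_i\bigl(1+(\lambda-1)s_i\bigr)h_i=0$, where $s_i:=h_i^\top c^\star$. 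Taking the trace of $XHX=I$ gives $\sum_i v_i\lVert Xh_i\rVert_2^2=n$. Combined with complementary slackness this yields
\[
\textstyle\sum_i v_i\bigl(1-2s_i-(\lambda-1)s_i^2\bigr)=n .
\]
Dotting the $c$-stationarity with $c^\star$ gives $\sum_i v_is_i=-(\lambda-1)\sum_i v_is_i^2$. With $\lambda_\alpha-1=\tfrac{2\alpha}{1-\alpha}$, this is $\sum_i v_i s_i = -\tfrac{2\alpha}{1-\alpha}\sum_i v_i s_i^2$. The case $\alpha=1$ (where $c=0$) is handled analogously and more simply.

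\textbf{Inner containment.} Because $\lambda\ge1$, feasibility gives
\[
\lVert X h_i\rVert_2^2\le 1-2s_i-(\lambda-1)s_i^2\le(1-s_i)^2 .
\]
By \eqref{eq:john-inscribed-condition} applied to $c^\star+E_{Q^\star}$, and the fact that $E_{Q^\star}=X^\star\B$ has support function $\lVert X^\star h\rVert_2$, we get $E_{Q^\star}\subseteq S-c^\star$. The enlarged factor $\min_i (1-s_i)/\lVert X^\star h_i\rVert_2$ then follows directly from the same support-function computation.

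\textbf{Outer containment (main step).} For $x\in S$ write $t_i:=h_i^\top x$. Then
\[
(x-c^\star)^\top Q^\star(x-c^\star)=\textstyle\sum_i v_i(t_i-s_i)^2 ,
\]
and we must bound this by $n/\alpha$. As in \Cref{lem:dual-inner}, $-\alpha x\in S$ gives $t_i\in[-1/\alpha,1]$, hence $\alpha t_i^2\le 1-(1-\alpha)t_i$. Expanding, this gives the pointwise bound
\[
\alpha(t_i-s_i)^2\le -(1-\alpha+2\alpha s_i)(t_i-s_i)+1-(1-\alpha)s_i-\alpha s_i^2 .
\]
Note that $1-\alpha+2\alpha s_i=(1-\alpha)\bigl(1+(\lambda_\alpha-1)s_i\bigr)$. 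So after weighting by $v_i$ and summing, the linear term becomes $(1-\alpha)\bigl(\sum_i v_i(1+(\lambda_\alpha-1)s_i)h_i\bigr)^\top(x-c^\star)$, which vanishes by $c$-stationarity. It then remains to show
\[
\textstyle\sum_i v_i\bigl(1-(1-\alpha)s_i-\alpha s_i^2\bigr)\le\sum_i v_i\bigl(1-2s_i-(\lambda_\alpha-1)s_i^2\bigr)=n .
\]
The difference of the two sides equals $(1+\alpha)\sum_i v_i\bigl(s_i+\tfrac{\alpha}{1-\alpha}s_i^2\bigr)$. Using the identity $\sum_i v_is_i=-\tfrac{2\alpha}{1-\alpha}\sum_i v_is_i^2$, this equals $-\tfrac{\alpha(1+\alpha)}{1-\alpha}\sum_i v_is_i^2\le0$, which gives the required inequality. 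Dividing by $\alpha$ then yields $S-c^\star\subseteq\sqrt{n/\alpha}\,E_{Q^\star}$.

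I expect this step to be the crux. The risk lies in choosing the right linear multiplier so that the $c$-stationarity condition kills the $t$-dependence. The residual sign is only controlled after dotting that same condition with $c^\star$, so I would verify this algebra carefully.
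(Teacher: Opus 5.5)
Your proof is correct and follows essentially the paper's route. You use the same KKT relations ($Q^\star=\sum_i v_ih_ih_i^\top$ and $\sum_i v_i\bigl(1+(\lambda_\alpha-1)s_i\bigr)h_i=0$), the same chord inequality from $t_i\in[-1/\alpha,1]$ weighted by the multipliers, and the same normalization. The differences are cosmetic: you obtain $e^\top v-\sum_i v_is_i=n$ from the trace identity plus complementary slackness rather than the ray-scaling argument of \Cref{lem:john-dual-scale}, and you recenter each scalar inequality at $s_i$ before summing. Your residual $-\tfrac{\alpha(1+\alpha)}{1-\alpha}\sum_i v_is_i^2\le0$ is exactly the paper's step $\alpha\le\lambda_\alpha-1$.
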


For any polytope in H-representation with nonempty interior, translating a Minkowski center to the origin and then applying \Cref{lm:john-rounding} yields a $\sqrt{\tfrac{n}{\sym(S)}}$-rounding and proves \Cref{thm:global-rounding} for polytopes.

As with the center-regularized MVCE, if $0$ is a Minkowski center of $S$ and $\lambda$ is chosen as in \eqref{eq:alpha-lambda}, then the center-regularized MVIE problem \eqref{eq:john-primal} recovers the MVIE at both endpoints of $\sym(S)$. \Cref{lm:john-rounding} likewise recovers the classical MVIE rounding guarantees at both endpoints of $\sym(S)$.

Computing the center-regularized MVIE involves first computing the Minkowski center and then solving \eqref{eq:john-primal}.
A Minkowski center and the value $\sym(S)$ can be computed by linear programming \citep[see][]{BelloniFreund2008}. 
After translating such a center to the
origin, \eqref{eq:john-primal} is a convex problem with a logarithmic determinant objective, $\ell$ second-order cone constraints, and one positive semidefinite cone constraint. It can be
addressed by an interior point method.

We prove \Cref{lm:john-rounding} in the rest of this section. 
For notational convenience, collect the halfspace normals in the column-stacked matrix
\begin{equation*}
 H:=[h_1,\ldots,h_\ell]\in\R^{n\times\ell}\,.
\end{equation*}

\subsection{Lagrange dual and optimality conditions of \texorpdfstring{\Cref{eq:john-primal}}{the center-regularized MVIE problem}}

In this subsection, we derive the Lagrange dual of \eqref{eq:john-primal} and state its optimality conditions.
For $1<\lambda<+\infty$, assign a multiplier $v_i\ge0$ to the $i$th constraint of
\eqref{eq:john-primal}.  The Lagrangian is
\begin{equation*}
 L(X,c,v)=-2\log\det X+\tr\!\left(XH\operatorname{Diag}(v)H^\top X\right)
 +(\lambda-1)c^\top H\operatorname{Diag}(v)H^\top c+2c^\top Hv-e^\top v\,.
\end{equation*}
The primal problem can be written as $\min_{X\succ0,\,c\in\R^n} \, \max_{v\ge0}L(X,c,v)$.
The Lagrange dual interchanges these two optimizations and maximizes the dual function
\begin{equation*}
 g(v):=\min_{X\succ0,\,c\in\R^n}L(X,c,v)
\end{equation*}
over $v\ge0$.  We therefore compute this minimum by minimizing first over $X$ and then over $c$.
If $H\operatorname{Diag}(v)H^\top$ is singular, the objective is unbounded below along a null direction.
Thus only positive definite matrices can occur in the domain of $g$.
When $H\operatorname{Diag}(v)H^\top\succ0$, the matrix part is
\begin{equation*}
 \min_{X\succ0}\left\{-2\log\det X
 +\tr\!\left(XH\operatorname{Diag}(v)H^\top X\right)\right\}
 =n+\log\det\!\left(H\operatorname{Diag}(v)H^\top\right),
\end{equation*}
and its unique minimizer is
$X=(H\operatorname{Diag}(v)H^\top)^{-\tfrac12}$.  
Next, minimization over $c$ gives
$c=-(\lambda-1)^{-1}(H\operatorname{Diag}(v)H^\top)^{-1}Hv$.  Substitution gives the Lagrange dual
\begin{equation}\tag{$\mathrm D_\lambda^{\mathrm J}$}\label{eq:john-dual}
\stepcounter{equation}
\begin{aligned}
 \operatorname*{maximize}_{v\in\R_+^\ell}\quad
 &n+\log\det\!\left(H\operatorname{Diag}(v)H^\top\right)-e^\top v
 -\frac1{\lambda-1}(Hv)^\top
 \left(H\operatorname{Diag}(v)H^\top\right)^{-1}Hv\\
 \text{subject to}\quad&H\operatorname{Diag}(v)H^\top\succ0\,.
\end{aligned}
\end{equation}
For the rest of the proof, we write
\begin{equation}\label{eq:john-Gv}
 G_v:=H\operatorname{Diag}(v)H^\top\,.
\end{equation}
The objective function in \eqref{eq:john-dual} is concave in $v$ on the domain $G_v\succ0$.

The primal problem has compact sublevel sets and therefore attains its optimal value.  The point
$(X,c)=(\varepsilon I,0)$ is strictly feasible for every sufficiently small $\varepsilon>0$, so Slater's condition
gives strong duality and dual attainment.  The Karush--Kuhn--Tucker optimality conditions are therefore necessary
and sufficient \citep[see also][for the dual problem of the ordinary MVIE]{Guertuna2010}.
Thus $(X,c)$ and $v$ are optimal for \eqref{eq:john-primal} and \eqref{eq:john-dual} if and only if
\begingroup
\begin{flalign}
 &\text{Primal feasibility:}\quad  
 X\succ0,\quad c\in\R^n,\quad
 \lVert Xh_i\rVert_2^2+(\lambda-1)(h_i^\top c)^2+2h_i^\top c\le1
 \quad(i=1,\ldots,\ell)\,. &&\notag\\
 &\text{Dual feasibility:}\quad  
 v\in\R_+^\ell,\quad G_v\succ0\,. &&\notag\\
 &\text{Stationarity:}\quad  
 X=G_v^{-1/2},\qquad Hv=-(\lambda-1)G_vc\,. &&\label{eq:john-optimality-stationarity}\\
 &\text{Complementary slackness:}\quad  
 v_i\bigl(\lVert Xh_i\rVert_2^2+(\lambda-1)(h_i^\top c)^2
           +2h_i^\top c-1\bigr)=0\quad(i=1,\ldots,\ell)\,. &&\notag
\end{flalign}
\endgroup
At $\lambda=+\infty$, one replaces the primal and complementary slackness expressions by their fixed center
forms $\lVert Xh_i\rVert_2^2\le1$, fixes $c=0$, and deletes the second stationarity equation.

From the optimality conditions, we also obtain the following equality for every optimal solution $v^\star$ of \eqref{eq:john-dual}.

\begin{lemma}\label{lem:john-dual-scale}
For $1<\lambda<+\infty$, every optimizer $v^\star$ of \eqref{eq:john-dual} satisfies
\begin{equation}\label{eq:john-dual-scale}
 e^\top v^\star
 +\frac1{\lambda-1}(Hv^\star)^\top G_{v^\star}^{-1}Hv^\star=n\,.
\end{equation}
At $\lambda=+\infty$, every dual optimizer satisfies $e^\top v^\star=n$.
\end{lemma}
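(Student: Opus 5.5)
Since strong duality holds and the KKT conditions are necessary and sufficient, any optimizer $v^\star$ pairs with a primal optimizer $(X^\star,c^\star)$ so that the conditions \eqref{eq:john-optimality-stationarity} and complementary slackness hold. As in the MVCE case, I will sum the complementary-slackness equations $v^\star_i(\cdots)=0$ over $i$ and evaluate each group of terms using stationarity.

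Summing over $i$ gives
\begin{equation*}
 \tr\!\left(X^\star G_{v^\star}X^\star\right)+(\lambda-1)(c^\star)^\top G_{v^\star}c^\star+2(c^\star)^\top Hv^\star-e^\top v^\star=0\,.
\end{equation*}
Stationarity gives $X^\star=G_{v^\star}^{-1/2}$, so the trace equals $n$. The second stationarity equation gives $c^\star=-(\lambda-1)^{-1}G_{v^\star}^{-1}Hv^\star$. Substituting this value of $c^\star$:
\begin{itemize}
\item the quadratic term becomes $(\lambda-1)^{-1}(Hv^\star)^\top G_{v^\star}^{-1}Hv^\star$;
\item the linear term becomes $-2(\lambda-1)^{-1}(Hv^\star)^\top G_{v^\star}^{-1}Hv^\star$.
\end{itemize}
Their sum is $-(\lambda-1)^{-1}(Hv^\star)^\top G_{v^\star}^{-1}Hv^\star$. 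The identity therefore reads
\begin{equation*}
 n-\frac1{\lambda-1}(Hv^\star)^\top G_{v^\star}^{-1}Hv^\star-e^\top v^\star=0\,,
\end{equation*}
which is exactly \eqref{eq:john-dual-scale}.

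At $\lambda=+\infty$ the center is fixed at $c=0$ and complementary slackness reads $v_i^\star(\lVert X^\star h_i\rVert_2^2-1)=0$. Summing gives $\tr(X^\star G_{v^\star}X^\star)=e^\top v^\star$, and $X^\star=G_{v^\star}^{-1/2}$ yields $e^\top v^\star=n$.

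The only delicate point is justifying that every dual optimizer, not just some, admits a primal partner satisfying KKT. This is standard: strong duality with primal attainment means that for any dual optimizer $v^\star$ and any primal optimizer, the KKT conditions hold jointly.

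Alternatively, one can argue directly from dual optimality by scaling. The dual objective along $tv$ is
\begin{equation*}
 n+n\log t+\log\det G_v-t\,e^\top v-\frac1{\lambda-1}(Hv)^\top G_v^{-1}Hv\,,
\end{equation*}
because $G_{tv}=tG_v$ makes the last term scale-invariant. Setting the derivative in $t$ to zero at $t=1$ gives only $e^\top v^\star=n$, which contradicts \eqref{eq:john-dual-scale} unless $Hv^\star=0$. This needs checking: $G_{tv}^{-1}=t^{-1}G_v^{-1}$ and $(H(tv))^\top\cdots=t^2$, so the last term actually scales like $t$. Redoing the derivative gives $n/t-e^\top v-\tfrac1{\lambda-1}(Hv)^\top G_v^{-1}Hv=0$ at $t=1$, which is exactly \eqref{eq:john-dual-scale}. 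This scaling argument is the cleaner proof, since it uses only dual optimality, and I would present it, with the KKT computation as a cross-check.
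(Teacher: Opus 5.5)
Your proposal is correct, and the scaling argument you end up choosing is exactly the paper's proof. Because $G_{tv^\star}=tG_{v^\star}$ and $H(tv^\star)=tHv^\star$, the dual objective along the ray is $n+\log\det G_{v^\star}+n\log t-t\bigl(e^\top v^\star+\frac{1}{\lambda-1}(Hv^\star)^\top G_{v^\star}^{-1}Hv^\star\bigr)$, and its derivative vanishes at the interior maximizer $t=1$.

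Your KKT route is a genuinely different argument, and it is also valid. It is the same complementary-slackness aggregation the paper uses for the MVCE identity $e^\top u^\star=n$. Your algebra is right: the trace equals $n$, and the quadratic and linear terms combine to $-\frac{1}{\lambda-1}(Hv^\star)^\top G_{v^\star}^{-1}Hv^\star$. The two routes differ in what they rely on:
\begin{itemize}
\item The KKT computation needs an arbitrary dual optimizer to be paired with a primal optimizer, so it needs zero duality gap plus primal attainment. You correctly flag this; it follows from the standard chain $g(v^\star)\le L(X^\star,c^\star,v^\star)\le -2\log\det X^\star=g(v^\star)$.
\item The scaling argument uses only that $v^\star$ maximizes the dual objective over a cone on which the objective has the form $\mathrm{const}+n\log t-t(\cdot)$. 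It needs no primal information at all.
\end{itemize}

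For a final write-up:
\begin{itemize}
\item Remove the incorrect display of the ray objective, which omits the factor $t$ on the last term, and remove the ``contradiction'' sentence that follows from it.
\item State explicitly that $tv^\star$ is dual feasible for every $t>0$.
\item If scaling is your main proof, handle $\lambda=+\infty$ the same way by dropping the last term. Your KKT computation for that case is also fine.
\end{itemize}
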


\begin{proof}
If $v^\star$ is dual optimal, then $tv^\star$ is dual feasible for every $t>0$.  Along this ray, the dual objective is
\begin{equation}\label{eq:john-dual-objective-ray}
 n+\log\det G_{v^\star}+n\log t
 -t\left(e^\top v^\star
 +\frac1{\lambda-1}(Hv^\star)^\top G_{v^\star}^{-1}Hv^\star\right).
\end{equation}
The derivative of \eqref{eq:john-dual-objective-ray} at the maximizer $t=1$ is zero, which gives
\eqref{eq:john-dual-scale}.  At $\lambda=+\infty$, the final term in parentheses is absent and the same
argument gives $e^\top v^\star=n$.
\end{proof}

\subsection{Inner and outer ellipsoids from primal and dual feasibility}

The inner containment uses only primal feasibility.

\begin{lemma}\label{lem:john-inner}
Let $1<\lambda\le+\infty$.  Every feasible $(X,c)$ of \eqref{eq:john-primal} satisfies
\begin{equation*}
 {E}_{X^{-2}}
 \subseteq
 \left(
  \min_{i}
  \frac{1-h_i^\top c}{\lVert Xh_i\rVert_2}
 \right){E}_{X^{-2}}
 \subseteq S-c\,.
\end{equation*}
\end{lemma}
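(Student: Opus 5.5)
The plan is to reduce everything to the inscribed-ellipsoid characterization \eqref{eq:john-inscribed-condition}, applied with a scaled ellipsoid. Set $r:=\min_i \frac{1-h_i^\top c}{\lVert Xh_i\rVert_2}$. Since the $h_i$ are nonzero and $X\succ0$, each denominator is positive, so $r$ is well defined. Note that $E_{X^{-2}}=\{Xw:\lVert w\rVert_2\le1\}$, and hence
\[
\max_{x\in tE_{X^{-2}}}h_i^\top x=t\lVert Xh_i\rVert_2 \quad\text{for } t\ge0.
\]
The proof then has two parts: show $r\ge1$, and show that $rE_{X^{-2}}\subseteq S-c$. Given $r\ge1$, the first inclusion $E_{X^{-2}}\subseteq rE_{X^{-2}}$ is immediate, since the ellipsoid is convex and contains the origin.

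For the second inclusion, take $x\in rE_{X^{-2}}$ and any $i$. We need $h_i^\top(c+x)\le1$. This follows from
\[
h_i^\top x\le r\lVert Xh_i\rVert_2\le 1-h_i^\top c,
\]
where the last step uses the definition of $r$. Only the sign condition $1-h_i^\top c>0$ is needed here, so that $r>0$ and the scaling makes sense.

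The main step is proving $r\ge1$, i.e.\ $\lVert Xh_i\rVert_2\le 1-h_i^\top c$ for every $i$, from the regularized constraint. Write $t_i:=h_i^\top c$.
\begin{itemize}
\item For finite $\lambda>1$, feasibility gives $\lVert Xh_i\rVert_2^2\le 1-2t_i-(\lambda-1)t_i^2$. Since $\lambda-1\ge0$ and $t_i^2\ge0$, the right-hand side is at most $1-2t_i+t_i^2=(1-t_i)^2$ exactly when $-(\lambda-1)t_i^2\le t_i^2$, i.e.\ $\lambda t_i^2\ge0$, which always holds. So $\lVert Xh_i\rVert_2^2\le(1-t_i)^2$.
\item To take square roots we need $1-t_i>0$. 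From feasibility, $(\lambda-1)t_i^2+2t_i\le1-\lVert Xh_i\rVert_2^2<1$, because $Xh_i\ne0$. If $t_i\ge1$, the left-hand side would be at least $2$, a contradiction. Hence $t_i<1$, as the paper already remarks, and $\lVert Xh_i\rVert_2\le1-t_i$.
\item At $\lambda=+\infty$ we have $c=0$ and $\lVert Xh_i\rVert_2\le1$, so $r=\min_i 1/\lVert Xh_i\rVert_2\ge1$ directly.
\end{itemize}

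I expect no real obstacle. The only care needed is the sign argument $t_i<1$, which must be settled before the square root can be taken.
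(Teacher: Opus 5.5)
Your proof is correct and takes essentially the same route as the paper: both rearrange the constraint into $\lVert Xh_i\rVert_2^2+\lambda(h_i^\top c)^2\le(1-h_i^\top c)^2$, use $1-h_i^\top c>0$ to obtain $r\ge1$, and then check that the scaled ellipsoid satisfies the support-function condition \eqref{eq:john-inscribed-condition}. One small addition on your side: you give an explicit argument that $h_i^\top c<1$, a step the paper only asserts.
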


\begin{proof}
For finite $\lambda$, rearranging primal feasibility yields
$\lVert Xh_i\rVert_2^2+\lambda(h_i^\top c)^2\le(1-h_i^\top c)^2$, and hence
$\lVert Xh_i\rVert_2^2\le(1-h_i^\top c)^2$.  Feasibility and $\lambda>1$ also imply
$1-h_i^\top c>0$.  Thus every nonzero $h_i$ satisfies
$\tfrac{1-h_i^\top c}{\lVert Xh_i\rVert_2}\ge1$.  At $\lambda=+\infty$, the center is zero and the constraints
$\lVert Xh_i\rVert_2^2\le1$ yield the same inequality.  The factor $\min_{i}
  \frac{1-h_i^\top c}{\lVert Xh_i\rVert_2}$ is therefore at least $1$, which
proves the first containment.  Moreover, for every nonzero $h_i$,
\begin{equation*}
 h_i^\top c
 +\left(
   \min_{j}
   \frac{1-h_j^\top c}{\lVert Xh_j\rVert_2}
  \right)\lVert Xh_i\rVert_2
 \le1 \,,
\end{equation*}
so \eqref{eq:john-inscribed-condition}, applied with
$Q=\left(\min_j\tfrac{1-h_j^\top c}{\lVert Xh_j\rVert_2}\right)^{-2}X^{-2}$, proves the second containment.
\end{proof}
 
\begin{lemma}\label{lem:john-outer}
Let $0<\alpha<1$ and $\lambda=\lambda_\alpha$.  Suppose that $v$ is feasible for
\eqref{eq:john-dual} and also satisfies
\begin{equation}\label{eq:john-dual-bound}
 e^\top v+\frac1{\lambda-1}(Hv)^\top G_v^{-1}Hv\le n\,.
\end{equation}
Set $c_v:=-(\lambda-1)^{-1}G_v^{-1}Hv$.  Then
\begin{equation}\label{eq:john-outer-containment}
 S-c_v\subseteq\sqrt{\frac n\alpha}\,{E}_{G_v}\,.
\end{equation}
At $\alpha=1$, every $v\in\R_+^\ell$ satisfying $G_v\succ0$ and $e^\top v\le n$ satisfies
$S\subseteq\sqrt n\,{E}_{G_v}$.
\end{lemma}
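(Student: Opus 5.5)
The plan is to prove \eqref{eq:john-outer-containment} pointwise: fix $x\in S$ and show $(x-c_v)^\top G_v(x-c_v)\le \tfrac n\alpha$. This is the H-representation analogue of the chord argument in \Cref{lem:dual-inner}. There the roles of points and normals were reversed; here I will use the halfspace normals $h_i$ and the dual weights $v_i$.

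\textbf{Step 1: the pointwise chord inequality.} Set $s_i:=h_i^\top x$. Since $x\in S$, we have $s_i\le 1$. Since $\alpha=\sym(0,S)$, we have $-\alpha x\in S$, hence $-\alpha s_i\le1$, i.e.\ $s_i\ge-\tfrac1\alpha$. Exactly as in \eqref{eq:pointwise-chord}, $(1-s_i)(1+\alpha s_i)\ge0$ gives $\alpha s_i^2+(1-\alpha)s_i\le1$. Multiplying by $v_i\ge0$ and summing, I get
\begin{equation*}
 \alpha\,x^\top G_v x+(1-\alpha)\,x^\top Hv\le e^\top v\,.
\end{equation*}

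\textbf{Step 2: expanding the quadratic around $c_v$.} Write $\beta:=\lambda_\alpha-1=\tfrac{2\alpha}{1-\alpha}$. Then $c_v=-\beta^{-1}G_v^{-1}Hv$, so $-2c_v^\top G_vx=\tfrac{2}{\beta}(Hv)^\top x=\tfrac{1-\alpha}{\alpha}x^\top Hv$, and $c_v^\top G_v c_v=\beta^{-2}(Hv)^\top G_v^{-1}Hv$. Hence
\begin{equation*}
 \alpha(x-c_v)^\top G_v(x-c_v)=\alpha x^\top G_vx+(1-\alpha)x^\top Hv+\frac{\alpha}{\beta^2}(Hv)^\top G_v^{-1}Hv\,.
\end{equation*}
The first two terms together are at most $e^\top v$ by Step 1. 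The choice $\lambda=\lambda_\alpha$ is precisely what makes the linear coefficient match the $(1-\alpha)$ from the chord inequality; this matching is the crux of the argument.

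\textbf{Step 3: bounding the remaining term.} It remains to compare $\tfrac{\alpha}{\beta^2}$ with $\tfrac1\beta$. We have $\tfrac{\alpha}{\beta}=\tfrac{1-\alpha}{2}\le1$, so $\tfrac{\alpha}{\beta^2}\le\tfrac1\beta$. Since $G_v^{-1}\succ0$, the quadratic term is nonnegative, and therefore
\begin{equation*}
 \alpha(x-c_v)^\top G_v(x-c_v)\le e^\top v+\frac1{\lambda-1}(Hv)^\top G_v^{-1}Hv\le n
\end{equation*}
by \eqref{eq:john-dual-bound}. This gives $x-c_v\in\sqrt{n/\alpha}\,E_{G_v}$, which is \eqref{eq:john-outer-containment}.

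\textbf{The case $\alpha=1$.} Here $S=-S$, so $s_i\in[-1,1]$ and $s_i^2\le1$. Summing with weights $v_i$ gives $x^\top G_vx\le e^\top v\le n$, i.e.\ $S\subseteq\sqrt n\,E_{G_v}$.

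The only delicate points are the bookkeeping of the coefficient identity $2/\beta=(1-\alpha)/\alpha$ and the sign of the leftover term. Both are routine once Step 2 is written out, so I do not expect a serious obstacle.
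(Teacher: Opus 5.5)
Your proposal is correct and follows essentially the same route as the paper. Both arguments weight the pointwise chord inequality $\alpha(h_i^\top x)^2+(1-\alpha)h_i^\top x\le1$ by $v_i$, use $(1-\alpha)(\lambda_\alpha-1)=2\alpha$ to complete the square around $c_v$, and absorb the leftover $c_v^\top G_vc_v$ term via $\alpha\le\lambda_\alpha-1$ before invoking \eqref{eq:john-dual-bound}; the $\alpha=1$ case is handled identically.
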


\begin{proof}
Suppose first that $0<\alpha<1$.  By the definition of ${E}_{G_v}$, \eqref{eq:john-outer-containment} is equivalent to
proving that every $x\in S$ satisfies
\begin{equation}\label{eq:john-outer-goal}
 (x-c_v)^\top G_v(x-c_v)\le\frac n\alpha\,.
\end{equation}
The symmetry assumption first gives a scalar inequality for every halfspace normal.  If $x\in S$,
then $h_i^\top x\le1$.  Since $-\alpha x\in S$, we also have $h_i^\top x\ge-\tfrac1\alpha$.  Consequently,
$(1-h_i^\top x)(1+\alpha h_i^\top x)\ge0$, and expanding it gives
\begin{equation}\label{eq:john-pointwise-chord}
 \alpha(h_i^\top x)^2+(1-\alpha)h_i^\top x\le1\,.
\end{equation}
Multiplying these inequalities by the nonnegative weights $v_i$ and summing yields
\begin{equation}\label{eq:john-weighted-chord}
 \sum_{i=1}^\ell v_i\bigl(\alpha(h_i^\top x)^2+(1-\alpha)h_i^\top x\bigr)
 =\alpha x^\top G_vx+(1-\alpha)(Hv)^\top x\le e^\top v\,.
\end{equation}
Here the equality follows from \eqref{eq:john-Gv}, and the inequality follows from
\eqref{eq:john-pointwise-chord} and $v\ge0$.

To recover the quadratic form in \eqref{eq:john-outer-goal}, use the definition of $c_v$ and
$ (1-\alpha)(\lambda_\alpha-1)=2\alpha$ (by the definition of $\lambda_\alpha$) to obtain:
$(1-\alpha)Hv=-2\alpha G_vc_v$.  Substituting this identity into \eqref{eq:john-weighted-chord} and adding
$\alpha c_v^\top G_vc_v$ to both sides yields
\begin{equation}\label{eq:john-outer-quadratic-bound}
 \alpha(x-c_v)^\top G_v(x-c_v)
 \le e^\top v+\alpha c_v^\top G_vc_v\,.
\end{equation}
Since $\lambda=\lambda_\alpha$, we have
$\lambda_\alpha-1=\tfrac{2\alpha}{1-\alpha}\ge\alpha$.  Therefore
\begin{equation*}
\begin{aligned}
 e^\top v+\alpha c_v^\top G_vc_v
 \le e^\top v+(\lambda_\alpha-1)c_v^\top G_vc_v  
 =e^\top v+\frac1{\lambda-1}(Hv)^\top G_v^{-1}Hv  
 \le n\,.
\end{aligned}
\end{equation*}
Here the equality uses $Hv=-(\lambda-1)G_vc_v$, which follows from the definition of $c_v$, and the final
inequality is \eqref{eq:john-dual-bound}.  Combining this bound with
\eqref{eq:john-outer-quadratic-bound} and dividing by $\alpha$ proves \eqref{eq:john-outer-goal}.  As $x\in S$
was arbitrary, this is the desired containment.

At $\alpha=1$, the assumed bound is $e^\top v\le n$, and the relation $-S\subseteq S$ implies
$|h_i^\top x|\le1$ for every $x\in S$ and every $i$.  Then
$x^\top G_vx=\sum_{i=1}^\ell v_i(h_i^\top x)^2\le e^\top v\le n$.
This is exactly $S\subseteq\sqrt n\,{E}_{G_v}$.
\end{proof}

\subsection{Proof of \texorpdfstring{\Cref{lm:john-rounding}}{the rounding lemma for the H-representation}}

\begin{proof}[Proof of \Cref{lm:john-rounding}]
Let $(X^\star,c^\star)$ be an arbitrary optimal solution of $(\mathrm J_{\lambda_\alpha})$, and set
$Q^\star:=(X^\star)^{-2}$.  Strong duality and dual attainment yield a dual optimizer $v^\star$
that satisfies the optimality conditions with $(X^\star,c^\star)$.  By \Cref{lem:john-dual-scale}, $v^\star$
also satisfies the additional condition in \Cref{lem:john-outer}.

Suppose first that $0<\alpha<1$.  The stationarity conditions in
\eqref{eq:john-optimality-stationarity} yield
\begin{equation*}
 c^\star=-\frac1{\lambda_\alpha-1}G_{v^\star}^{-1}Hv^\star=c_{v^\star}\,,
 \qquad
 X^\star=G_{v^\star}^{-\tfrac12}\,.
\end{equation*}
In particular, $G_{v^\star}=(X^\star)^{-2}=Q^\star$.  The two inner containments follow from
\Cref{lem:john-inner}, the outer containment follows from \Cref{lem:john-outer}, and the two stationarity identities show that the inner and outer ellipsoids have the same center and shape.

At $\alpha=1$, the center is fixed at zero.  The stationarity condition gives
$G_{v^\star}=(X^\star)^{-2}=Q^\star$, and \Cref{lem:john-inner,lem:john-outer} give the same sandwich with factor $\sqrt n$.
\end{proof}

\section{Sharpness of the symmetry-dependence in rounding}\label{sec:sharpness-family}

In this section, we show that the factor $\sqrt{\tfrac{n}{\sym(S)}}$ is nearly tight in its dependence on dimension and symmetry.  When
$\tfrac{n+1}{1+\sym(S)}$ is an integer, we show by explicit construction that the factor $\sqrt{\tfrac{n}{\sym(S)}}$ is tight.  More generally, for every dimension $n$ and every admissible symmetry value, the factor is sharp up to a universal constant.

Fix an integer $n\ge1$, and let $\Delta_n:=\conv\{a_1,\ldots,a_{n+1}\}$ where
$a_1,\ldots,a_{n+1}\in\R^n$ satisfy
\begin{equation}\label{eq:regular-simplex-normalization}
    \|a_i\|_2=1
    \quad\text{for all }i,
    \qquad
    \langle a_i,a_j\rangle=-\frac1n
    \quad\text{for all }i\neq j\,.
\end{equation}
It is easy to show that with \eqref{eq:regular-simplex-normalization}, $\sum_{i=1}^{n+1}a_i=0$ and all edges have the same length (\Cref{lemma:simplex-frame-identities}).
We therefore call $\Delta_n$ the \emph{normalized regular simplex}.
For $n=1$, $a_1,\ldots,a_{n+1}$ could be the two
points $a_1=1$ and $a_2=-1$.  For $n=2$, they could be the vertices $(1,0)$ and
$(-\tfrac12,\pm\tfrac{\sqrt3}{2})$ of an equilateral triangle on the unit circle.

The family of convex bodies we study is the convex hull of $\Delta_n$ and its scaled reflection:
\begin{equation*}
 S_{n,\alpha}:=\conv\bigl(\Delta_n\cup(-\alpha\Delta_n)\bigr)
 =\conv\{a_i,-\alpha a_i:i=1,\ldots,n+1\}\subset\R^n\,.
\end{equation*}
 
\begin{theorem}\label{thm:sharpness-family}
For every $n\ge1$ and every $\alpha\in[\tfrac1n,1]$, the body $S_{n,\alpha}$ satisfies $\sym(0,S_{n,\alpha})=\sym(S_{n,\alpha})=\alpha$.
Moreover, every $c\in\R^n$, origin-centered ellipsoid $E$, and $\rho\ge1$ satisfying
$E\subseteq S_{n,\alpha}-c\subseteq\rho E$ obey
\begin{equation}\label{eq:uniform-family-rounding-lower-bound}
 \rho\ge\sqrt{\frac{2n}{3\alpha}}\,.
\end{equation}
If, in addition, $\tfrac{n+1}{1+\alpha}$ is an integer, then 
\begin{equation}\label{eq:family-rounding-lower-bound}
 \rho\ge\sqrt{\frac n\alpha}\,.
\end{equation}
\end{theorem}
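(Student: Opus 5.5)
The plan has three parts: compute $\sym$ from the explicit facet structure of $S_{n,\alpha}$; in the integer case, prove the lower bound on $\rho$ by a group-orbit trace argument; and in the general case, reduce to the integer case by a facet/sub-simplex argument. I expect the elimination of the center $c$ in the second part to be the main difficulty.

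\textbf{Symmetry value.} By \Cref{lemma:simplex-frame-identities}, $\sum_i a_i=0$. For $I\subseteq\{1,\ldots,n+1\}$ with $|I|=k$, the vector $w_I:=\sum_{i\in I}a_i$ satisfies
\[
w_I^\top a_i=\tfrac{n+1-k}{n}\ (i\in I),\qquad w_I^\top(-\alpha a_j)=\tfrac{\alpha k}{n}\ (j\notin I).
\]
The support function of $S_{n,\alpha}$ in direction $w_I$ is therefore $\max\{\tfrac{n+1-k}{n},\tfrac{\alpha k}{n}\}$, and the facets come from the $k$ at which these two values cross, namely $k\approx\tfrac{n+1}{1+\alpha}$.

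To get $\sym(0,S_{n,\alpha})\ge\alpha$, I check vertices. We have $-\alpha a_i\in S_{n,\alpha}$. Also $-\alpha(-\alpha a_i)=\alpha^2a_i\in S_{n,\alpha}$, since $0\in S_{n,\alpha}$ and $\alpha^2\le1$. So $-\alpha S_{n,\alpha}\subseteq S_{n,\alpha}$.

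For the reverse inequality $\sym(x,S_{n,\alpha})\le\alpha$ at every $x$, I use the characterization
\[
\sym(x,S)=\min_y\frac{h_S(-y)+y^\top x}{h_S(y)-y^\top x},
\]
where $h_S$ is the support function. I apply it to the directions $y=a_i$ and $y=-a_i$. Summing the numerators and denominators over the orbit $\{a_i\}$ kills the $x$-terms, because $\sum_i a_i=0$. Since $h_S(a_i)=1$ and $h_S(-a_i)=\alpha$, the resulting average ratio equals $\alpha$, and hence some $y$ in the orbit gives ratio at most $\alpha$.

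\textbf{Lower bound in the integer case.} Apply a linear map taking $E$ to a general ellipsoid $E_Q$ and write $P:=Q^{-1}$. Normalize each facet as $h_F^\top x\le1$. The sandwich $E_Q\subseteq S_{n,\alpha}-c\subseteq\rho E_Q$ then reads:
\begin{itemize}
\item inner: $h_F^\top Ph_F\le(1-h_F^\top c)^2$ for every facet $F$;
\item outer: $(v-c)^\top Q(v-c)\le\rho^2$ for every vertex $v$.
\end{itemize}
Let $G$ be the symmetric group $S_{n+1}$ acting on the simplex by orthogonal maps. It permutes both the facets and the vertices of $S_{n,\alpha}$, and it acts irreducibly on $\R^n$. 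I sum each family of inequalities over a full orbit and use two facts.
\begin{itemize}
\item $\sum_F h_Fh_F^\top=\beta I$ and $\sum_v vv^\top=\gamma I$ for explicit scalars $\beta,\gamma$, by irreducibility.
\item $\sum_F h_F=0$ and $\sum_v v=0$.
\end{itemize}
This yields
\[
\beta\tr P\le N_F+\beta\lVert c\rVert_2^2,\qquad \gamma\tr Q+N_v\,c^\top Qc\le N_v\rho^2,
\]
where $N_F$ and $N_v$ are the orbit sizes. Combined with $\tr P\,\tr Q\ge n^2$, this should give
\[
\rho^2\ge\frac{\beta n^2}{\gamma N_F}
\]
once $c$ is controlled.

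In the integer case, every facet has type $k=\tfrac{n+1}{1+\alpha}$ and contains $k$ vertices $a_i$ together with $n+1-k$ vertices $-\alpha a_j$. I expect $\beta,\gamma,N_F,N_v$ to be computable in closed form from \eqref{eq:regular-simplex-normalization} and to give exactly $n/\alpha$. One could also take the vertex orbits of $\{a_i\}$ and $\{-\alpha a_i\}$ separately and keep whichever bound is stronger.

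\textbf{Main obstacle: eliminating $c$.} The term $\beta\lVert c\rVert_2^2$ in the inner inequality has the wrong sign. I would first try to absorb it with the outer inequality's term $N_v c^\top Qc$. Concretely, I would use
\[
\beta\lVert c\rVert_2^2\le \beta\, c^\top Qc\cdot\lambda_{\max}(P)\le \beta\, c^\top Qc\,\tr P,
\]
and then combine the two inequalities into a single inequality in $t:=\tr P$ and $s:=c^\top Qc$. The hope is that minimizing over $t,s$ subject to $t\,\tr Q\ge n^2$ still forces $\rho^2\ge n/\alpha$. If this absorption is too lossy, the fallback is a second orbit-averaging in which each facet's inequality is paired with the vertices lying on that facet. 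Since $c$ enters both $(1-h_F^\top c)$ and $(v-c)$ linearly, one can choose the pairing weights so that the linear $c$-terms cancel exactly and the quadratic $c$-terms appear with a favorable sign.

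\textbf{General case.} When $\tfrac{n+1}{1+\alpha}$ is not an integer, I would rerun the same averaging with the two neighbouring facet types $k=\lfloor\tfrac{n+1}{1+\alpha}\rfloor$ and $k=\lceil\tfrac{n+1}{1+\alpha}\rceil$. Alternatively, I would restrict to a lower-dimensional sub-simplex configuration in which the integrality condition holds and the symmetry is comparable. The rounding loss $\lfloor x\rfloor/x\ge\tfrac23$ in the relevant range should produce the constant $\tfrac23$ in \eqref{eq:uniform-family-rounding-lower-bound}.
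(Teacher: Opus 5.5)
Your computation of $\sym(S_{n,\alpha})$ is fine; it is the paper's argument written with support functions. The lower bound on $\rho$, however, has a genuine gap exactly where you flag it, and the absorption you propose cannot close it, because the three aggregated inequalities are already too weak once $c\neq0$.

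Take $n\ge2$ and $\alpha=\tfrac1n$. Then $S_{n,\alpha}=\Delta_n$, an integer case. Use the facet normals $h_j=-na_j$ and the vertex orbit $\{a_i\}$. Your two sums read $\tr P\le\tfrac1n+\lVert c\rVert_2^2$ and $\tfrac1n\tr Q+c^\top Qc\le\rho^2$. Now choose $c=\tfrac12a_1\in S$, $P=pI$ with $p=\tfrac1n(\tfrac1n+\tfrac14)$, and $Q=P^{-1}$. These satisfy both sums and $\tr P\,\tr Q=n^2$, with $\rho^2=\tfrac{5n^2}{n+4}<n^2=n/\alpha$, which is of order $5n$ rather than $n^2$. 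So no manipulation of these three inequalities can give the claim. In particular, $\lVert c\rVert_2^2\le c^\top Qc\,\tr P$ only yields $\tr P\,(1-c^\top Qc)\le\alpha$, which becomes vacuous as $c^\top Qc\to1$. Your pairing fallback is unspecified, and its natural form (pair $F$ with $v\in F$, using $1-h_F^\top c=h_F^\top(v-c)$ and Cauchy--Schwarz) gives only $\rho\ge1$.

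What causes the wrong-sign $\lVert c\rVert_2^2$ is squaring the facet constraints before summing. Within your framework the fix is to sum them unsquared:
\[
 h_F^\top c+\lVert P^{1/2}h_F\rVert_2\le1 .
\]
Then $c$ cancels exactly because $\sum_Fh_F=0$. Combined with $h_F^\top P^{1/2}h_F\le\lVert h_F\rVert_2\lVert P^{1/2}h_F\rVert_2$, this gives $\tr P^{1/2}\le n/\lVert h_F\rVert_2$. On the outer side, $c^\top Qc\ge0$ has the favorable sign and can be dropped, giving $\tr Q\le n\rho^2/\lVert v\rVert_2^2$. The power-mean inequality then yields $\rho\ge\lVert h_F\rVert_2\lVert v\rVert_2=\sqrt{n/\alpha}$. (Incidentally, your $c=0$ formula should read $\rho^2\ge\beta\gamma n^2/(N_FN_v)$.)

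The paper removes $c$ differently. The MVCE and MVIE are extremal over all centers, so $\rho^n\ge\vol(E_L)/\vol(E_J)$ for every rounding. Uniqueness together with invariance under the maps $T_\pi$ forces both to be origin-centered balls, $\B$ and $r_J\B$, hence $\rho\ge1/r_J$.

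The general case has a second gap: your facet description is correct only when $x:=\tfrac{n+1}{1+\alpha}$ is an integer. Otherwise a hyperplane with normal $w_I$, $|I|\in\{\lfloor x\rfloor,\lceil x\rceil\}$, touches $S_{n,\alpha}$ either only in the $k\le n-1$ points $a_i$ ($i\in I$) or only in the $n+1-k\le n-1$ points $-\alpha a_j$ ($j\notin I$). So it is never a facet, and the bound it gives can be trivial. For $n=2$, $\alpha=1$ (the regular hexagon) these normals are $\pm a_i$ with $\lVert h\rVert_2=1$, giving only $\rho\ge1$, whereas $\sqrt{2n/(3\alpha)}=2/\sqrt3$ is both required and attained.

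The nearest facets are of mixed type: $\ell$ points $a_i$, $n-\ell$ points $-\alpha a_j$, and one index absent. The proof needs the exact inradius
\[
 r_J^2=\tfrac{n+1}{n}\bigl(\max\{\textstyle\sum_iy_i^2:\sum_iy_i=0,\ -\tfrac1\alpha\le y_i\le1\}\bigr)^{-1},
\]
evaluated by an extreme-point argument. It also needs the estimate $\tfrac{(1+\alpha)^2}{\alpha(n+1)}\{x\}(1-\{x\})\le\tfrac13$, which is where the constant $\tfrac23$ actually comes from. Neither the heuristic $\lfloor x\rfloor/x\ge\tfrac23$ nor the sub-simplex reduction is derived in your proposal.
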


\citet{BrandenbergGonzalezMerinoGrundbacher2026} also independently construct equality cases for the related Banach--Mazur
distance bound when $\frac{1}{\alpha}$ is an integer divisor of $n$.
Since the Banach--Mazur distance to the Euclidean ball is a lower
bound on every rounding factor, their
examples also yield the lower bound
$\sqrt{n/\alpha}$ for those parameter pairs. 
The family $S_{n,\alpha}$ defined above
for every admissible pair $(n,\alpha)$ yields the uniform lower
bound $\sqrt{2n/(3\alpha)}$ throughout the full parameter range,
and establishes exact sharpness whenever
$\tfrac{n+1}{1+\alpha}$ is an integer.
 
The proof uses the symmetry of $S_{n,\alpha}$ and the radii of its extremal covering and inscribed ellipsoids.  We
begin with two facts about the normalized regular simplex.

\begin{lemma}\label{lemma:simplex-frame-identities}
For the normalized regular simplex
$\Delta_n=\conv\{a_1,\ldots,a_{n+1}\}\subset\R^n$ defined above,
one has $\sum_{i=1}^{n+1}a_i=0$, all edges of $\Delta_n$ have the same
length $\sqrt{2(n+1)/n}$, and
$\sum_{i=1}^{n+1}a_i a_i^\top=\frac{n+1}{n}I_n$.
\end{lemma}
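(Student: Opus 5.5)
The plan is to derive all three identities directly from the Gram relations \eqref{eq:regular-simplex-normalization}, working with the vector $s:=\sum_{i=1}^{n+1}a_i$ and the matrix $P:=\sum_{i=1}^{n+1}a_ia_i^\top$.

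\emph{Step 1: $s=0$.} Expand the squared norm using the Gram data:
\begin{equation*}
\lVert s\rVert_2^2=\sum_{i}\lVert a_i\rVert_2^2+\sum_{i\ne j}\langle a_i,a_j\rangle=(n+1)-(n+1)n\cdot\frac1n=0\,.
\end{equation*}
Hence $s=0$.

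\emph{Step 2: edge lengths.} For $i\ne j$,
\begin{equation*}
\lVert a_i-a_j\rVert_2^2=1+1+\frac2n=\frac{2(n+1)}{n}\,.
\end{equation*}
This gives the common edge length $\sqrt{2(n+1)/n}$.

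\emph{Step 3: the frame identity $P=\tfrac{n+1}{n}I_n$.} This is the step requiring a small argument, since it uses that the $a_i$ span $\R^n$. First compute $Pa_j$ for each $j$:
\begin{equation*}
Pa_j=\sum_i a_i\langle a_i,a_j\rangle=a_j-\frac1n\sum_{i\ne j}a_i=a_j-\frac1n(s-a_j)=\frac{n+1}{n}a_j\,,
\end{equation*}
where the last equality uses $s=0$ from Step 1.

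It remains to show that $a_1,\ldots,a_{n+1}$ span $\R^n$; then $P$ and $\tfrac{n+1}{n}I_n$ agree on a spanning set, so they are equal. Consider the Gram matrix of $a_1,\ldots,a_n$, which is $G=(1+\tfrac1n)I_n-\tfrac1n ee^\top$. Its eigenvalues are $1+\tfrac1n$ with multiplicity $n-1$, and $1+\tfrac1n-1=\tfrac1n>0$ on the vector $e$. So $G\succ0$, hence $a_1,\ldots,a_n$ are linearly independent and span $\R^n$.

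For $n=1$ every step is trivial to check directly. I expect no real obstacle; the only point requiring care is the spanning argument in Step 3, which the positive definiteness of $G$ settles.
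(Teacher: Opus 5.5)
Your proof is correct and follows essentially the same route as the paper: the edge-length computation and the evaluation $Pa_j=\tfrac{n+1}{n}a_j$ on each vertex, followed by a spanning argument, are exactly what the paper does. The only differences are small improvements. You get $s=0$ directly from $\lVert s\rVert_2^2=0$, whereas the paper checks $\langle s,a_j\rangle=0$ for every $j$ and then invokes spanning. You also actually justify that $a_1,\ldots,a_n$ span $\R^n$ via positive definiteness of their Gram matrix, which the paper simply asserts.
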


\begin{proof}
Let $s:=\sum_i a_i$.  For every $j$,
$\langle s,a_j\rangle=1+\sum_{i\ne j}\langle a_i,a_j\rangle
=1-n(1/n)=0$.  Since the vertices span $\R^n$, $s=0$.
Moreover, for $i\ne j$,
$\|a_i-a_j\|_2^2=2-2\langle a_i,a_j\rangle
=2(n+1)/n$, so all edges have the same length.

Let $H:=\sum_i a_i a_i^\top$.  Fix $j$.  In $Ha_j$, the term with
$i=j$ equals $a_j$, whereas each term with $i\ne j$ equals $-a_i/n$.
Since $\sum_i a_i=0$, one has $\sum_{i\ne j}a_i=-a_j$.  Therefore
$Ha_j=a_j-\frac1n\sum_{i\ne j}a_i=\frac{n+1}{n}a_j$.
The vertices span $\R^n$, so $H=\frac{n+1}{n}I_n$.
\end{proof} 

Let $\Pi$ denote the group of all permutations of $\{1,\ldots,n+1\}$.

\begin{lemma} \label{lem:simplex-permutation-symmetries}
For every permutation $\pi\in \Pi$, there exists a unique linear map $T_\pi:\R^n\to\R^n$ such that
$T_\pi a_i=a_{\pi(i)}$ for every $i$.  Moreover, $T_\pi$ is orthogonal,
$T_\pi S_{n,\alpha}=S_{n,\alpha}$, and $\sum_{\pi\in \Pi}T_\pi=0$.
\end{lemma}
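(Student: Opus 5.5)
To build $T_\pi$, I will use that $a_1,\ldots,a_n$ form a basis of $\R^n$ (the Gram matrix of all $n+1$ vectors has rank $n$, and $\sum_i a_i=0$ by \Cref{lemma:simplex-frame-identities}). Define $T_\pi$ on this basis by $T_\pi a_i:=a_{\pi(i)}$ for $i\le n$ and extend linearly. Then, since $\sum_i a_{\pi(i)}=0$,
\[
T_\pi a_{n+1}=-\sum_{i\le n}T_\pi a_i=-\sum_{i\le n}a_{\pi(i)}=a_{\pi(n+1)},
\]
so $T_\pi a_i=a_{\pi(i)}$ for every $i$. Uniqueness follows because any linear map is determined by its values on a basis.

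For orthogonality, I will compare Gram matrices. We have $\langle T_\pi a_i,T_\pi a_j\rangle=\langle a_{\pi(i)},a_{\pi(j)}\rangle$. This equals $1$ if $i=j$ and $-\tfrac1n$ otherwise, i.e. it equals $\langle a_i,a_j\rangle$, because $\pi$ is a bijection and the normalization \eqref{eq:regular-simplex-normalization} depends only on whether $i=j$. A linear map that preserves inner products on a basis preserves them everywhere by bilinearity, so $T_\pi^\top T_\pi=I_n$.

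For invariance of $S_{n,\alpha}$, linearity gives that $T_\pi$ maps the generating set $\{a_i,-\alpha a_i\}$ to $\{a_{\pi(i)},-\alpha a_{\pi(i)}\}$, which is the same set. Since $T_\pi$ is linear, it commutes with taking convex hulls, so $T_\pi S_{n,\alpha}=S_{n,\alpha}$.

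For $\sum_{\pi}T_\pi=0$, I evaluate the sum on each $a_i$. For each $j$, exactly $n!$ permutations send $i$ to $j$, so
\[
\sum_{\pi}T_\pi a_i=\sum_\pi a_{\pi(i)}=n!\sum_{j}a_j=0.
\]
Because the $a_i$ span $\R^n$, the linear map $\sum_\pi T_\pi$ is zero. No step is a real obstacle; the only point needing care is the consistency check $T_\pi a_{n+1}=a_{\pi(n+1)}$ in the construction, which uses $\sum_i a_i=0$.
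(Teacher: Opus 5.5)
Your proof is correct and follows the paper's argument step for step: you define $T_\pi$ on the basis $a_1,\ldots,a_n$ and check consistency at $a_{n+1}$ via $\sum_i a_i=0$, you get orthogonality from preservation of the Gram entries plus bilinearity, you get invariance of $S_{n,\alpha}$ from the permuted generating set, and you show $\sum_\pi T_\pi=0$ with the $n!$ counting argument. The only difference is that you justify that $a_1,\ldots,a_n$ is a basis via the rank of the Gram matrix and $\sum_i a_i=0$, a small step the paper simply asserts.
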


\begin{proof}
First, $a_1,\ldots,a_n$ form a basis of $\R^n$. 
Fix $\pi\in \Pi$.  There is therefore a unique linear map $T_\pi$ satisfying
$T_\pi a_i=a_{\pi(i)}$ for $i=1,\ldots,n$.  Since $\sum_{i=1}^{n+1}a_i=0$, linearity gives
$T_\pi a_{n+1} = - \sum_{i=1}^n T_\pi a_i
 = -\sum_{i=1}^n a_{\pi(i)}
 = a_{\pi(n+1)},$
so $T_\pi a_i=a_{\pi(i)}$ for every $i$.
Since $T_\pi$ permutes both sets of vertices $\{a_i\}$ and $\{-\alpha a_i\}$, taking convex hulls
gives $T_\pi S_{n,\alpha}=S_{n,\alpha}$.

The permutation preserves every inner product between vertices.  More explicitly, if
$x=\sum_{i=1}^n c_i a_i$ and $y=\sum_{j=1}^n d_j a_j$, then
\begin{align*}
 \langle T_\pi x,T_\pi y\rangle
 &=\sum_{i,j=1}^n c_i d_j\langle a_{\pi(i)},a_{\pi(j)}\rangle
 =\sum_{i,j=1}^n c_i d_j\langle a_i,a_j\rangle
 =\langle x,y\rangle\,.
\end{align*}
This proves that $T_\pi$ is orthogonal.

Finally, for each $i$, every vertex occurs exactly $n!$ times among the vectors $a_{\pi(i)}$.  Hence
$(\sum_{\pi\in \Pi}T_\pi)a_i=n!\sum_{j=1}^{n+1}a_j=0$.
The vectors $a_1,\ldots,a_{n+1}$ span $\R^n$, and hence the linear map
$\sum_{\pi\in \Pi}T_\pi$ is zero.
\end{proof}

\subsection{Symmetry of \texorpdfstring{$S_{n,\alpha}$}{S(n, alpha)}}

In this subsection, we show the symmetry of $S_{n,\alpha}$.

\begin{lemma}\label{lem:family-symmetry}
The origin is a Minkowski center of $S_{n,\alpha}$, and
\begin{equation*}
 \sym(0,S_{n,\alpha})=\sym(S_{n,\alpha})=\alpha\,.
\end{equation*}
\end{lemma}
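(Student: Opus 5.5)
The plan is to prove the two inequalities $\sym(0,S_{n,\alpha})=\alpha$ and $\sym(S_{n,\alpha})\le\sym(0,S_{n,\alpha})$ separately. Together with the trivial bound $\sym(S_{n,\alpha})\ge\sym(0,S_{n,\alpha})$, these give the claim, and they show that $0$ is a Minkowski center.

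\emph{Step 1: $\sym(0,S_{n,\alpha})\ge\alpha$.} Since $-\alpha S_{n,\alpha}=\conv\{-\alpha a_i,\alpha^2a_i\}$, it suffices to check that each of these points lies in $S_{n,\alpha}$.
\begin{itemize}
\item The points $-\alpha a_i$ are vertices of $S_{n,\alpha}$.
\item By \Cref{lemma:simplex-frame-identities}, $0=\tfrac1{n+1}\sum_i a_i\in\Delta_n\subseteq S_{n,\alpha}$.
\item Since $0\le\alpha^2\le1$, we have $\alpha^2a_i=\alpha^2a_i+(1-\alpha^2)\cdot0\in S_{n,\alpha}$.
\end{itemize}

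\emph{Step 1': $\sym(0,S_{n,\alpha})\le\alpha$.} Suppose $-\beta S_{n,\alpha}\subseteq S_{n,\alpha}$; in particular $-\beta a_j\in S_{n,\alpha}$. We compare with the support value of $S_{n,\alpha}$ in the direction $-a_j$, evaluating $\langle -a_j,\cdot\rangle$ at all vertices using \eqref{eq:regular-simplex-normalization}:
\begin{itemize}
\item at $a_j$ it equals $-1$, and at $a_i$ with $i\ne j$ it equals $\tfrac1n$;
\item at $-\alpha a_j$ it equals $\alpha$, and at $-\alpha a_i$ with $i\ne j$ it equals $-\tfrac{\alpha}{n}$.
\end{itemize}
Because $\alpha\ge\tfrac1n$, the maximum over $S_{n,\alpha}$ is $\alpha$. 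Since $\langle -a_j,-\beta a_j\rangle=\beta$, we get $\beta\le\alpha$. Hence $\sym(0,S_{n,\alpha})=\alpha$.

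\emph{Step 2: $\sym(S_{n,\alpha})\le\alpha$, by symmetrization.} First note that for any convex body $S$ and $\beta\ge0$, the superlevel set $L_\beta:=\{x:\sym(x,S)\ge\beta\}$ is convex. Indeed, $-\beta(S-x)\subseteq S-x$ is equivalent to $(1+\beta)x-\beta s\in S$ for every $s\in S$. For each fixed $s$ this condition is convex in $x$, because it requires an affine function of $x$ to lie in the convex set $S$. Intersecting over $s$ keeps convexity.

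Now let $x^\ast$ be a Minkowski center with $\sym(x^\ast,S_{n,\alpha})=\beta:=\sym(S_{n,\alpha})$. By \Cref{lem:simplex-permutation-symmetries}, each $T_\pi$ is linear and satisfies $T_\pi S_{n,\alpha}=S_{n,\alpha}$. Applying $T_\pi$ to the inclusion $-\beta(S_{n,\alpha}-x^\ast)\subseteq S_{n,\alpha}-x^\ast$ therefore gives $\sym(T_\pi x^\ast,S_{n,\alpha})\ge\beta$. By convexity of $L_\beta$, the average $\tfrac1{|\Pi|}\sum_{\pi}T_\pi x^\ast$ also lies in $L_\beta$. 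By \Cref{lem:simplex-permutation-symmetries} this average equals $0$. Thus $\beta\le\sym(0,S_{n,\alpha})=\alpha$.

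The only slightly delicate points are the convexity of $L_\beta$ and the support-function computation in Step 1'. The latter is where the hypothesis $\alpha\ge\tfrac1n$ is used, so I expect Step 1' to be the main check.
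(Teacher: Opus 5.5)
Your proof is correct. Step~1 coincides with the paper's argument for $\sym(0,S_{n,\alpha})\ge\alpha$, but you reach the global bound $\sym(S_{n,\alpha})\le\alpha$ by a different route.

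The paper handles an arbitrary center $c$ directly. If $(1+\beta)c-\beta S_{n,\alpha}\subseteq S_{n,\alpha}$, then $(1+\beta)c-\beta a_i\in S_{n,\alpha}$. Combined with $\min_{x\in S_{n,\alpha}}\langle a_i,x\rangle=-\alpha$, this gives $(1+\beta)\langle a_i,c\rangle-\beta\ge-\alpha$ for every $i$. Summing over $i$ and using $\sum_i a_i=0$ cancels $c$ and yields $\beta\le\alpha$ for all centers at once, so the origin needs no separate treatment.

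Your Step~1$'$ is exactly this inequality at $c=0$ for a single index. Instead of summing to cancel the $c$-dependence, you move an arbitrary Minkowski center to the origin by averaging over $\{T_\pi\}$. This uses $\sum_{\pi}T_\pi=0$ from \Cref{lem:simplex-permutation-symmetries} and the convexity of $\{x:-\beta(S_{n,\alpha}-x)\subseteq S_{n,\alpha}-x\}$. That inclusion set, rather than $L_\beta$, is all your chain of inclusions actually uses, so you can skip identifying it with the superlevel set. The paper uses the same averaging device in \Cref{lem:spherical-extremal-ellipsoids} to center the extremal ellipsoids.

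The paper's argument is shorter and more elementary: it needs only $\sum_i a_i=0$ and is in effect a linear-programming dual certificate for the Minkowski center. Yours is more structural: it shows that any convex body invariant under a finite group of linear maps whose average is zero has the origin as a Minkowski center. This separates locating the center from computing its symmetry value, at the cost of invoking the permutation symmetries and the convexity of the set of $\beta$-symmetric centers.
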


A short consequence of the general asymmetry comparison in
\citet[Theorem~6.1]{BrandenbergKoenig2015} is the following:
if $K$ is Minkowski centered at the origin and
$\tau\in[\sym(K),1]$, then the origin is a Minkowski center of
$\conv(K\cup(-\tau K))$ and $\sym\!\left(\conv(K\cup(-\tau K))\right)=\tau$.
Taking $K=\Delta_n$ and $\tau=\alpha$ gives
\Cref{lem:family-symmetry}.  For completeness, we include the following
short self-contained proof.

\begin{proof}[Proof of \Cref{lem:family-symmetry}]
Since $0\in\Delta_n$ and $0<\alpha\le1$, one has $\alpha^2\Delta_n\subseteq\Delta_n$.  Scaling the defining
convex hull by $-\alpha$ gives the convex hull of $-\alpha\Delta_n$ and $\alpha^2\Delta_n$.  Therefore
$-\alpha S_{n,\alpha}\subseteq S_{n,\alpha}$.
Thus $\sym(0,S_{n,\alpha})\ge\alpha$.

For the reverse inequality, fix any vertex $a_i$.  By \eqref{eq:regular-simplex-normalization},
\begin{equation*}
 \min_{x\in S_{n,\alpha}}\langle a_i,x\rangle
 = \min\left\{\min_j\langle a_i,a_j\rangle,\min_j\langle a_i,-\alpha a_j\rangle\right\} = \min\left\{-\frac{1}{n},-\alpha\right\}=
 -\alpha\,.
\end{equation*}
Here the last equality uses $\alpha\ge\tfrac1n$.
Suppose that $c\in\R^n$ and $\beta>0$ satisfy
\begin{equation*}
 (1+\beta)c-\beta S_{n,\alpha}\subseteq S_{n,\alpha}\,.
\end{equation*}
Since $a_i\in S_{n,\alpha}$, the point $(1+\beta)c-\beta a_i$ belongs to $S_{n,\alpha}$.  The preceding minimum
therefore gives
 $(1+\beta)\langle a_i,c\rangle-\beta\ge-\alpha$ for each $i=1,\ldots,n+1$.
Summing these inequalities and using $\sum_i a_i=0$ gives $-(n+1)\beta\ge-(n+1)\alpha$, and hence
$\beta\le\alpha$.  Thus no center has symmetry greater than $\alpha$.  Together with
$\sym(0,S_{n,\alpha})\ge\alpha$, this finishes the proof.
\end{proof}

\subsection{MVCE and MVIE of \texorpdfstring{$S_{n,\alpha}$}{S(n, alpha)}}

\begin{lemma}\label{lem:exact-extremal-ellipsoids}
For every $n\ge1$ and $\alpha\in[\tfrac1n,1]$, let ${E}_L$ denote the MVCE of
$S_{n,\alpha}$.  Then ${E}_L=\B$.
Let ${E}_J$ denote the MVIE of $S_{n,\alpha}$ (also called the John ellipsoid), and
write ${E}_J=r_J\B$.  Its radius satisfies
$\sqrt{\tfrac\alpha n}\le r_J\le\sqrt{\tfrac{3\alpha}{2n}}$.  If, in addition,
$\tfrac{n+1}{1+\alpha}$ is an integer, then $r_J=\sqrt{\tfrac\alpha n}$.
\end{lemma}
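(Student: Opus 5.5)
The plan is to use the symmetry group from \Cref{lem:simplex-permutation-symmetries} to show that both extremal ellipsoids are Euclidean balls centered at the origin, so only their radii remain to be found. The MVCE and the MVIE of a convex body are each unique. Every $T_\pi$ is orthogonal and maps $S_{n,\alpha}$ onto itself, so $T_\pi$ maps each extremal ellipsoid $c+{E}_Q$ onto itself. Hence $T_\pi c=c$ for every $\pi$, and summing over $\Pi$ with $\sum_\pi T_\pi=0$ gives $(n+1)!\,c=0$, so $c=0$. Likewise $T_\pi Q T_\pi^\top=Q$, so $Q$ commutes with every $T_\pi$. The maps $T_\pi$ form the standard $n$-dimensional representation of the symmetric group, which is irreducible, so Schur's lemma gives $Q=rI$. (If I want to avoid representation theory, I can argue directly: an eigenspace of $Q$ invariant under all $T_\pi$ and containing some $a_i-a_j$ must contain all such differences, which span $\R^n$.) Thus ${E}_L=r_L\B$ and ${E}_J=r_J\B$.

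For the MVCE, every vertex has norm $1$ or $\alpha\le1$, so $S_{n,\alpha}\subseteq\B$ and $r_L\le1$. Conversely $a_1\in S_{n,\alpha}$ with $\|a_1\|_2=1$ forces $r_L\ge1$, so ${E}_L=\B$. For the MVIE, $r_J$ is the inradius about the origin, $r_J=\min_{\|y\|_2=1}h(y)$, where the support function is $h(y)=\max\{\max_i y^\top a_i,\ \alpha\max_i(-y^\top a_i)\}$. For the lower bound I apply \Cref{lem:dual-inner} to $S_{n,\alpha}$ with respect to its $2(n+1)$ listed points. I take $\tilde u$ uniform, $\tilde u_i=\tfrac1{n+1}$, on the $a_i$ and zero on the $-\alpha a_i$. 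Then $A\tilde u=0$, so the center is $c=0$, and by \Cref{lemma:simplex-frame-identities} $\widehat Q=\tfrac1{n+1}\sum_i a_ia_i^\top=\tfrac1nI$. Since $\sym(0,S_{n,\alpha})=\alpha$ by \Cref{lem:family-symmetry}, the lemma gives the ball $\sqrt{\alpha/n}\,\B\subseteq S_{n,\alpha}$, hence $r_J\ge\sqrt{\alpha/n}$.

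For the upper bound I test the directions $y_T:=\sum_{i\in T}a_i$ with $|T|=k$, $1\le k\le n$. From the Gram relations, $y_T^\top a_j=\tfrac{n-k+1}{n}$ for $j\in T$ and $y_T^\top a_j=-\tfrac kn$ for $j\notin T$. Also $\|y_T\|_2^2=\tfrac{k(n-k+1)}{n}$. Therefore
\begin{equation*}
 r_J\le\frac{\max\{n+1-k,\ \alpha k\}}{\sqrt{n\,k\,(n+1-k)}}\,.
\end{equation*}
The two terms in the maximum balance at $k_0=\tfrac{n+1}{1+\alpha}$, where the bound equals $\sqrt{\alpha/n}$; this proves the exact case when $k_0$ is an integer. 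Note that $k_0\in[\tfrac{n+1}2,n]$ for $\alpha\in[\tfrac1n,1]$, so $k_0$ is admissible.

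The main obstacle is the non-integer case. There I will take $k=\lfloor k_0\rfloor$ or $k=\lceil k_0\rceil$, whichever is better. Writing the bound as $\sqrt{\alpha/n}$ times a correction, the correction is $\sqrt{\alpha k/(n+1-k)}$ when $k\ge k_0$ and $\sqrt{(n+1-k)/(\alpha k)}$ when $k\le k_0$. So I need to show that one of these is at most $\sqrt{3/2}$. I will parametrize $k_0=m+\theta$ with $m$ an integer and $\theta\in(0,1)$, and use $n+1-k_0=\alpha k_0$ to express both corrections in terms of $m$, $\theta$ and $\alpha k_0$. For the ceiling, $\alpha\lceil k_0\rceil/(n+1-\lceil k_0\rceil)=\alpha(m+1)/(\alpha k_0-1+\theta)$. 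For the floor, $(n+1-m)/(\alpha m)=(\alpha k_0+\theta)/(\alpha(k_0-\theta))$. I then bound the minimum of these two expressions. The delicate regime is when $\alpha k_0=n+1-k_0$ is small (near $\alpha\approx\tfrac1n$), where the ceiling fails and the floor must carry the bound. I expect the worst case to be $n+1-k_0$ between $1$ and $2$, giving the constant $\tfrac32$, and I will verify this by checking monotonicity in $\theta$ and handling small $n$ separately.
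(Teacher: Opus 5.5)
Most of your argument is correct: the symmetry reduction, the MVCE computation, the lower bound $r_J\ge\sqrt{\alpha/n}$ via \Cref{lem:dual-inner} with uniform weights on the $a_i$ (a cleaner route than the paper's, which reads it off an exact formula), and the integer case via $y_T$ with $|T|=k_0$. The gap is the non-integer upper bound. The inequality you plan to verify,
\[
\min\left\{\frac{\alpha\lceil k_0\rceil}{n+1-\lceil k_0\rceil},\ \frac{n+1-\lfloor k_0\rfloor}{\alpha\lfloor k_0\rfloor}\right\}\le\frac32,
\]
is false for every $2\le n\le 8$ on a nontrivial interval of $\alpha$, so no case analysis will rescue it.

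The cleanest counterexample is $n=2$, $\alpha=1$. There $S_{2,1}$ is the regular hexagon and $k_0=\tfrac32$. Your two test directions are $a_1$ and $a_1+a_2=-a_3$, both pointing at vertices, so they only give $r_J\le1$. The claimed and true value is $r_J=\tfrac{\sqrt3}{2}=\sqrt{3\alpha/(2n)}$.

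More generally:
\begin{itemize}
\item For $n=2$ your best bound is $r_J\le\alpha$, which is at most $\sqrt{3\alpha/4}$ only when $\alpha\le\tfrac34$.
\item For $n=3$ the two candidates give $r_J^2\le\tfrac{\alpha}{n}\cdot3\alpha$ and $r_J^2\le\tfrac{\alpha}{n}\cdot\tfrac1\alpha$. Both exceed $\tfrac{\alpha}{n}\cdot\tfrac32$ for $\alpha\in(\tfrac12,\tfrac23)$.
\item In the regime $\lfloor k_0\rfloor=n-1$, the two corrections cross at $\alpha=\sqrt{2/(n(n-1))}$ with common value $\sqrt{2n/(n-1)}$. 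This exceeds $\tfrac32$ exactly when $n\le 8$. So your route does work for $n\ge9$, but not below.
\end{itemize}
Your bound is monotone in $k$ on each side of $k_0$, so no other $|T|$ helps. Your heuristic about the delicate regime is also reversed: near $\alpha=\tfrac1n$ the ceiling works, with correction tending to $1$. The failure occurs in the middle of that regime.

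The missing idea is that the worst direction has a three-valued coordinate profile, not a two-valued one. Let $m=\lfloor k_0\rfloor$ and $\theta=k_0-m$. Take $y\in\R^{n+1}$ with $m$ coordinates equal to $1$, $n-m$ coordinates equal to $-\tfrac1\alpha$, and one coordinate $t=\tfrac{n-m}{\alpha}-m=\tfrac{\theta(1+\alpha)-1}{\alpha}\in[-\tfrac1\alpha,1)$, so that $\sum_iy_i=0$.

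For $w=\sum_iy_ia_i$, the simplex identities give $\langle w,a_j\rangle=\tfrac{n+1}{n}y_j$ and $\|w\|_2^2=\tfrac{n+1}{n}\sum_iy_i^2$. Hence the support function of $S_{n,\alpha}$ at $w/\|w\|_2$ is at most $\sqrt{\tfrac{n+1}{n}}\bigl(\sum_iy_i^2\bigr)^{-1/2}$. A direct computation gives $\sum_iy_i^2=\tfrac{n+1}{\alpha}-\theta(1-\theta)\tfrac{(1+\alpha)^2}{\alpha^2}$. Therefore
\[
r_J^2\le\frac{\alpha}{n}\left(1-\frac{(1+\alpha)^2}{\alpha(n+1)}\,\theta(1-\theta)\right)^{-1}.
\]
The constant $\tfrac32$ then follows for $n\ge3$ from $\theta(1-\theta)\le\tfrac14$ and $\tfrac{(1+\alpha)^2}{\alpha(n+1)}\le\tfrac{n+1}{n}$; the case $n=2$ is checked directly. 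For the hexagon this $w$ is proportional to $a_1-a_3$, an edge normal rather than a vertex direction.

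This is the paper's route. The paper additionally shows this choice is optimal, by maximizing $\sum_iy_i^2$ over $\{\sum_iy_i=0,\,-\tfrac1\alpha\le y_i\le1\}$, which yields $r_J$ exactly.
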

 
Before proving this lemma, we first show that both extremal ellipsoids are spherical.

\begin{lemma}\label{lem:spherical-extremal-ellipsoids}
For every $n\ge1$ and $\alpha\in[\tfrac1n,1]$, let ${E}_J$ and ${E}_L$ denote the MVIE and
MVCE of $S_{n,\alpha}$, respectively.  There exist radii $r_J,r_L>0$ such that
${E}_J=r_J\B$ and ${E}_L=r_L\B$.
\end{lemma}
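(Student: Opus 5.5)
We use uniqueness of the extremal ellipsoids together with the symmetry group from \Cref{lem:simplex-permutation-symmetries}. Both the MVCE and the MVIE of a convex body are unique (classical results of John and Löwner), and both are equivariant under affine maps: if $T$ is an invertible linear map with $TS_{n,\alpha}=S_{n,\alpha}$, then $T E_L$ is a covering ellipsoid of $S_{n,\alpha}$ with the same volume as $E_L$ (because $|\det T|=1$ for orthogonal $T$). By uniqueness, $T E_L=E_L$, and the same argument gives $T E_J=E_J$. Write $E_L=c_L+\{x: x^\top Q_L x\le 1\}$. Invariance under every $T_\pi$ then yields
\begin{equation*}
T_\pi c_L=c_L,\qquad T_\pi Q_L T_\pi^\top=Q_L\qquad\text{for all }\pi\in\Pi,
\end{equation*}
using orthogonality $T_\pi^{-1}=T_\pi^\top$. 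The same holds for $E_J$.

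\textbf{Centers.} Averaging $T_\pi c_L=c_L$ over $\Pi$ and using $\sum_{\pi}T_\pi=0$ from \Cref{lem:simplex-permutation-symmetries} gives $(n+1)!\,c_L=0$, so $c_L=0$. Likewise $c_J=0$.

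\textbf{Shape matrices.} We must show that a symmetric matrix $Q$ commuting with every $T_\pi$ is a multiple of $I_n$. The cleanest route is irreducibility of the representation $\pi\mapsto T_\pi$; to keep things self-contained we argue directly. Take the transposition $\pi=(i\,j)$. Then $T_\pi$ fixes every $a_k$ with $k\ne i,j$ and swaps $a_i,a_j$. It is therefore the orthogonal reflection across the hyperplane orthogonal to $a_i-a_j$: it fixes $a_i+a_j$ and each $a_k$ with $k\ne i,j$, which together span a hyperplane orthogonal to $a_i-a_j$ (check via the inner products $\langle a_k,a_i-a_j\rangle=0$), and it sends $a_i-a_j\mapsto -(a_i-a_j)$.

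Because $Q$ commutes with this reflection, $Q$ preserves its $-1$ eigenspace, so $a_i-a_j$ is an eigenvector of $Q$ for every pair $i\ne j$. Now consider $a_1-a_2$, $a_2-a_3$, and $a_1-a_3=(a_1-a_2)+(a_2-a_3)$, all eigenvectors. If a sum of two linearly independent eigenvectors is again an eigenvector, their eigenvalues coincide. Hence all the $a_i-a_j$ share a single eigenvalue. These vectors span $\R^n$ (for $n\ge1$), so $Q=\mu I_n$. For $n=1$ the claim is trivial.

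This gives $E_L=r_L\B$ and $E_J=r_J\B$ with $r_L,r_J>0$, where positivity holds because $S_{n,\alpha}$ has nonempty interior. We expect the main obstacle to be the uniqueness and equivariance step, which we cite as classical; the commutant computation itself is elementary.
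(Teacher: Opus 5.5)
Your proof is correct. Its skeleton matches the paper's: uniqueness of the extremal ellipsoids, invariance $T_\pi E=E$, centering via $\sum_{\pi\in\Pi}T_\pi=0$, and then showing that a matrix commuting with every $T_\pi$ is a multiple of $I_n$. The genuine difference is in that last commutant step.

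The paper uses point stabilizers. $Qa_i$ is fixed by every $T_\pi$ with $\pi(i)=i$. Expanding $Qa_i$ in the basis $\{a_j\}_{j\ne i}$ and applying transpositions that fix $i$ forces equal coefficients, so $Qa_i=d_ia_i$ by $\sum_{j\ne i}a_j=-a_i$. Transitivity of $\Pi$ then gives a common $d_i$.

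You instead recognize $T_{(i\,j)}$ as the reflection in $(a_i-a_j)^\perp$. So the differences $a_i-a_j$, rather than the vertices, are the eigenvectors, and you equate eigenvalues along triangles $a_i-a_k=(a_i-a_j)+(a_j-a_k)$ rather than by transitivity. Your version is the more conceptual one: it is the usual irreducibility argument for a reflection group with a connected root system. The paper's needs no geometric identification of $T_\pi$, only the basis $\{a_j\}_{j\ne i}$ and the zero-sum identity.

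Two small points should be made explicit in yours:
\begin{enumerate}
\item The $-1$-eigenspace of $T_{(i\,j)}$ is one-dimensional. This holds because the fixed vectors $a_k$ ($k\ne i,j$) and $a_i+a_j$, together with $a_i-a_j$, span $\R^n$.
\item The triangle step should be stated for an arbitrary triple $i,j,k$, so that any two differences are linked by a chain. Here $a_i-a_j$ and $a_j-a_k$ are independent by affine independence of the vertices, which needs $n\ge2$.
\end{enumerate}
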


\begin{proof}
The MVIE and MVCE are unique \citep{Todd2016}.  Let ${E}$ denote
either one.  By \Cref{lem:simplex-permutation-symmetries}, $T_\pi{E}$ is an ellipsoid of the same volume satisfying
the same inner or outer containment as ${E}$.  Uniqueness therefore gives
\begin{equation}\label{eq:extremal-ellipsoid-invariance}
 T_\pi{E}={E}
 \qquad \text{for all }\pi\in \Pi\,.
\end{equation}
Let $c$ be the center of ${E}$.  The center of $T_\pi{E}$ is $T_\pi c$, so
\eqref{eq:extremal-ellipsoid-invariance} implies $T_\pi c=c$ for every $\pi$.  Averaging these identities and using
\Cref{lem:simplex-permutation-symmetries} gives 
$
 c=\tfrac1{(n+1)!}\sum_{\pi\in \Pi}T_\pi c=0.
$
Thus both ellipsoids are centered at the origin.

It remains to determine their shape.  Write the origin-centered ellipsoid as
${E}=\{x\in\R^n:x^\top Qx\le1\}$ with $Q\succ0$.  
Because $T_\pi$ is orthogonal,
\eqref{eq:extremal-ellipsoid-invariance} is equivalent to $T_\pi^\top QT_\pi=Q$, or $QT_\pi=T_\pi Q$,
for all $\pi\in \Pi$.  
Fix $i$.  If $\pi(i)=i$, then
$
 T_\pi(Qa_i)=QT_\pi a_i=Qa_i.
$
Thus $Qa_i$ is fixed by every $T_\pi$ for which $\pi(i)=i$. 
This invariance forces $Qa_i$ to be a multiple of $a_i$.  The $n$ vectors
$\{a_j:j\ne i\}$ form a basis of $\R^n$, so write
$
 Qa_i=\sum_{j\ne i}\gamma_j a_j.
$
If $\gamma_j\ne\gamma_k$ for some $j,k\ne i$, let $\pi$ be the transposition that interchanges $j$ and $k$ and
fixes all other indices.  Then $T_\pi$ changes this basis representation, contradicting
$T_\pi(Qa_i)=Qa_i$.  Thus all
$\gamma_j$ are equal.  Since $\sum_{j\ne i}a_j=-a_i$, there is a
scalar $d_i$ such that
$
 Qa_i=d_i a_i.
$
If $\pi(i)=j$,
then
$
 Qa_j=QT_\pi a_i=T_\pi Qa_i=d_i a_j.
$
Comparing this identity with $Qa_j=d_j a_j$ gives $d_i=d_j$.  Consequently, all $d_i$ are equal to a common
scalar $d$.  Since $a_1,\ldots,a_n$ form a basis of $\R^n$, it follows that $Q=dI_n$.  Positive definiteness of
$Q$ gives $d>0$, so ${E}$ is a Euclidean ball centered at the origin.
\end{proof}

\begin{proof}[Proof of \Cref{lem:exact-extremal-ellipsoids}]
We first compute the radius of the MVCE.  By \Cref{lem:spherical-extremal-ellipsoids}, this ellipsoid
is $r_L\B$.  Every $a_i$ has unit norm and lies on the boundary of $\B$, whereas every $-\alpha a_i$ has norm $\alpha$ and belongs to $\B$.  Hence $\B$
is the smallest origin-centered ball containing $S_{n,\alpha}$, and therefore $r_L=1$.

It remains to compute the radius of the MVIE.  By
\Cref{lem:spherical-extremal-ellipsoids}, this ellipsoid is
$r_J\B$, and hence
$
 r_J=\max\{r\ge0:r\B\subseteq S_{n,\alpha}\}.
$
We use the separating hyperplane theorem to characterize this inclusion.   The inclusion
 $r\B\subseteq S_{n,\alpha}$ holds if and only if
\begin{equation*}
r=\max_{x\in r\B}\langle u,x\rangle\le\max_{x\in S_{n,\alpha}}\langle u,x\rangle  \,,
\end{equation*}
for every unit vector $u$.  Let
$g(u):=\max_{x\in S_{n,\alpha}}\langle u,x\rangle$.  
Since $g$ is continuous on the compact unit sphere, taking the largest $r$ gives
\begin{equation}
 r_J
 =\max\{r\ge0:r\B\subseteq S_{n,\alpha}\}
 =\min_{\|u\|_2=1}g(u).\,
\end{equation}   

To obtain an exact formula for $r_J$, fix a unit
vector $u$ and set $s_i:=\langle u,a_i\rangle$.  By $\sum_i a_i=0$ and
\Cref{lemma:simplex-frame-identities},
\begin{equation}\label{eq:directional-moments}
 \sum_{i=1}^{n+1}s_i=0\,,
 \qquad
 \sum_{i=1}^{n+1}s_i^2=\frac{n+1}{n}\,.
\end{equation} 
Since $0\in\intt S_{n,\alpha}$, one has $g(u)>0$.  By the definition of $g$, the numbers
$y_i:=\tfrac{s_i}{g(u)}$ satisfy $\sum_i y_i=0$ and $-\tfrac1\alpha\le y_i\le1$.  The second identity in
\eqref{eq:directional-moments} then gives $g(u)^2=\tfrac{n+1}{n\sum_i y_i^2}$.  Conversely, for any numbers
$y_1,\ldots,y_{n+1}$, not all zero, satisfying these same constraints, let $w:=\sum_i y_i a_i$.  The simplex identities give
\begin{equation*}
 \langle w,a_i\rangle=\frac{n+1}{n}y_i
 \quad (i=1,\ldots,n+1),
 \qquad
 \lVert w\rVert_2^2=\frac{n+1}{n}\sum_{i=1}^{n+1}y_i^2\,.
\end{equation*}
Thus, for $u:=\tfrac{w}{\lVert w\rVert_2}$,
$g(u)\le\sqrt{\tfrac{n+1}{n}}\bigl(\sum_i y_i^2\bigr)^{-\tfrac12}$, because
$\max_i y_i\le1$ and $-\alpha\min_i y_i\le1$.  The first construction proves one direction of the following
identity, while choosing a maximizer $y$ in the converse construction proves the reverse direction:
\begin{equation}\label{eq:john-radius-exact-maximization}
 r_J^2=\frac{n+1}{n}
 \left(
 \max\left\{
 \sum_{i=1}^{n+1}y_i^2:
 \sum_{i=1}^{n+1}y_i=0,\quad
 -\frac1\alpha\le y_i\le1\ (\text{for all }i=1,\ldots,n+1)
 \right\}
 \right)^{-1}\,.
\end{equation}

It remains to evaluate this maximum.  The feasible set in
\eqref{eq:john-radius-exact-maximization} is a compact polytope, and the objective is convex, so it has a
maximizer at an extreme point.  At such a point, at most one coordinate lies strictly between $-\tfrac1\alpha$ and
$1$.  If an extreme point has
$\ell$ coordinates equal to $1$ and $n-\ell$ equal to $-\tfrac1\alpha$, its remaining coordinate is fixed by the
zero-sum constraint.  Up to a permutation, a maximizer therefore has
$\lfloor\tfrac{n+1}{1+\alpha}\rfloor$ coordinates equal to $1$,
$n-\lfloor\tfrac{n+1}{1+\alpha}\rfloor$ coordinates equal to $-\tfrac1\alpha$, and one remaining coordinate
fixed by the zero-sum constraint.  Let $\{x\}:=x-\lfloor x\rfloor$ denote the fractional part of $x$. Direct substitution into \eqref{eq:john-radius-exact-maximization} gives
\begin{equation}\label{eq:john-radius-exact-formula}
 r_J^2
 =\frac{\alpha}{n}
 \left(
 1-\frac{(1+\alpha)^2}{\alpha(n+1)}
 \left\{\frac{n+1}{1+\alpha}\right\}
 \left(1-\left\{\frac{n+1}{1+\alpha}\right\}\right)
 \right)^{-1}\,.
\end{equation}

The claimed bounds follow from this formula.  The term subtracted
from $1$ in \eqref{eq:john-radius-exact-formula} is nonnegative, so
$r_J^2\ge \tfrac{\alpha}{n}$.  For the reverse bound, it suffices to show
that this term is at most $\tfrac13$.
First suppose that $n\ge3$.  Since $\alpha\in[\tfrac1n,1]$ and the function
$t\mapsto \tfrac{t}{(1+t)^2}$ is increasing on $[0,1]$, we have
$
 \frac{(1+\alpha)^2}{\alpha(n+1)}
 \le
 \frac{(1+\tfrac1n)^2}{(\tfrac1n)(n+1)}
 =
 \frac{n+1}{n}.
$
Moreover, the product $\left\{\frac{n+1}{1+\alpha}\right\}
 \left(1-\left\{\frac{n+1}{1+\alpha}\right\}\right)$ is at most $\tfrac14$.  Hence the term subtracted from $1$ is at most
$
 \frac{n+1}{4n}\le\frac13.
$ 

When $n=2$, a direct calculation gives
$
 \frac{(1+\alpha)^2}{3\alpha}
 \left\{\frac{3}{1+\alpha}\right\}
 \left(1-\left\{\frac{3}{1+\alpha}\right\}\right)
 =
 \frac13-\frac{2(1-\alpha)^2}{3\alpha}
 \le\frac13.
$
Thus, for every $n\ge2$, the parenthesized factor in
\eqref{eq:john-radius-exact-formula} is at least $\tfrac23$, and therefore
$r_J^2\le\tfrac{3\alpha}{2n}$.  When $n=1$, necessarily
$\alpha=1$ and $r_J=1$.

Finally, if $\tfrac{n+1}{1+\alpha}$ is an integer, then $\left\{\frac{n+1}{1+\alpha}\right\}=0$ and $r_J^2=\tfrac\alpha n$.  This  
completes the proof.
\end{proof}

\begin{proof}[Proof of \Cref{thm:sharpness-family}]
The symmetry identity is \Cref{lem:family-symmetry}.  Let ${E}_J=r_J\B$ and ${E}_L=\B$ denote the MVIE and MVCE of $S_{n,\alpha}$, respectively.  For any rounding
$E\subseteq S_{n,\alpha}-c\subseteq\rho E$, the extremal-volume properties of ${E}_J$ and ${E}_L$ give
\begin{equation*}
 \rho^n
 =\frac{\vol(\rho E)}{\vol(E)}
 \ge
 \frac{\vol({E}_L)}{\vol({E}_J)}
 =\frac{\vol(\B)}{\vol(r_J\B)}
 =\left(\frac1{r_J}\right)^n\,.
\end{equation*}
The uniform upper bound $r_J\le\sqrt{\tfrac{3\alpha}{2n}}$ in \Cref{lem:exact-extremal-ellipsoids} therefore proves
\eqref{eq:uniform-family-rounding-lower-bound}.  If $\tfrac{n+1}{1+\alpha}$ is an integer, the same lemma
gives $r_J=\sqrt{\tfrac\alpha n}$, which proves \eqref{eq:family-rounding-lower-bound}.
\end{proof}

\appendix
\section{Numerical details for \texorpdfstring{\cref{fig:four-roundings}}{Figure 1}}\label{app:numerical-example}

The polytope in \cref{fig:four-roundings} is
\begin{equation}
 K:=\conv\{(0,0),(2,0),(2.407407,0.498538),(0,\tfrac32),(-0.740741,0.593567)\}.
\end{equation}
Numerically, $\sym(K)\approx0.7072$, and hence
$\sqrt{\tfrac{2}{\sym(K)}}\approx1.6816$, strictly smaller than the factor $2$ guaranteed by the MVCE and MVIE.

The center and shape in \cref{fig:four-roundings}(a) are obtained from the MVCE problem
\eqref{eq:free-loewner-primal}, and those in \cref{fig:four-roundings}(b) are obtained from the MVIE problem
\eqref{eq:free-john-primal}.  In \cref{fig:four-roundings}(a), the solid ellipsoid is the MVCE and the
dashed ellipsoid is its $\tfrac12$-scaled copy.  In \cref{fig:four-roundings}(b), the solid ellipsoid is the MVIE and the dashed ellipsoid is its $2$-scaled copy.  In both figures the two displayed ellipsoids are tangent to $K$.

For \cref{fig:four-roundings}(c), we translate a Minkowski center to the origin and solve
\eqref{eq:center-regularized-loewner} with $\lambda=\lambda_\alpha$.  The dashed
inner ellipsoid is $c^\star+\sqrt{\tfrac\alpha n}\,{E}_{Q^\star}$, and the solid outer ellipsoid is
$c^\star+\sqrt{1-\lambda_\alpha(c^\star)^\top Q^\star c^\star}\,{E}_{Q^\star}$.
These are the inner ellipsoid and the refined outer ellipsoid in \Cref{lm:rounding-w-representation}.  The rounding factor is
$\sqrt{\tfrac{n}{\alpha}\bigl(1-\lambda_\alpha(c^\star)^\top Q^\star c^\star\bigr)}\approx1.6679$.
The outer ellipsoid is tangent to $K$, whereas the inner ellipsoid has positive slack despite appearing tangent.

For \cref{fig:four-roundings}(d), we normalize the translated halfspace description as in \eqref{eq:john-h-representation} and solve
\eqref{eq:john-primal} with $\lambda=\lambda_\alpha$.  The solid inner ellipsoid and
the dashed outer ellipsoid are, respectively,
$c^\star+\left(\min_{i}\tfrac{1-h_i^\top c^\star}{\lVert X^\star h_i\rVert_2}\right){E}_{Q^\star}$ and
$c^\star+\sqrt{\tfrac n\alpha}\,{E}_{Q^\star}$,
where $Q^\star=(X^\star)^{-2}$.  These are the refined inner ellipsoid and the outer ellipsoid in
\Cref{lm:john-rounding}.  The rounding factor is
$\sqrt{\tfrac n\alpha}\left(\min_{i}\tfrac{1-h_i^\top c^\star}{\lVert X^\star h_i\rVert_2}\right)^{-1}\approx1.6720$.
The inner ellipsoid is tangent to $K$, whereas the outer ellipsoid has positive slack despite appearing tangent.
 
The optimization models are implemented in Julia/JuMP using the solver MOSEK.

\begingroup
\bibliographystyle{abbrvnat}
\bibliography{reference}
\endgroup

\end{document}